\numberwithin{equation}{section}
\newcommand{\maK}{\mathcal K}
\newcommand{\maT}{\mathcal T}
\newcommand{\bm}[1]{\mbox{\boldmath$#1$}}
\def\be{\begin{equation}}
\def\ee{\end{equation}}
\def\bal{\begin{aligned}}
\def\eal{\end{aligned}}
\def\bes{\begin{equation*}}
\def\ees{\end{equation*}}
\newcommand{\pa}{\partial}
\newtheorem{theorem}{Theorem}[section]
\newtheorem{lemma}[theorem]{Lemma}
\newtheorem{lem}[theorem]{Lemma}
\newtheorem{definition}[theorem]{Definition}
\newtheorem{remark}[theorem]{Remark}
\newtheorem{example}[theorem]{Example}
\begin{document}
\graphicspath{{Fig/}}

\title{A GRADED MESH REFINEMENT FOR 2D POISSON'S EQUATION ON NON-CONVEX POLYGONAL DOMAINS}
\author{Charuka D. Wickramasinghe $^1$, Priyanka Ahire$^{2,*}$}
\maketitle

\begin{center}
{
\footnotesize $^1$Karmanos Cancer Institute, Wayne State University, School of Medicine, Detroit, MI, 48201, United States \\
$^2$Department of Mathematics, Wayne State University, Detroit, MI, 48202, United States\\
$^*$Corresponding author:gn6587@wayne.edu\\

}
\end{center}

\begin{abstract}
This work delves into solving the two-dimensional Poisson problem through the Finite Element Method which is relevant in various physical scenarios including heat conduction, electrostatics, gravity potential, and fluid dynamics. However, finding exact solutions to these problems can be complicated and challenging due to complexities in the domains such as re-entrant corners, cracks, and discontinuities of the solution along the boundaries, and due to the singular source function $f$. Our focus in this work is to solve the Poisson equation in the presence of re-entrant corners at the vertices of $\Omega$ where some of the interior angles are greater than $\pi$.  When the domain features a re-entrant corner, the numerical solution can display singular behavior near the corners. To address this, we propose a graded mesh algorithm that helps us to tackle the solution near singular points. We derive $H^1$ and $L^2$ error estimate results, and we use MATLAB to present numerical results that validate our theoretical findings. By exploring these concepts, we hope to provide new insights into the Poisson problem and inspire future research into the application of numerical methods to solve complex physical scenarios.

\noindent 2020 Mathematics Subject Classification. 00000; 00000.

\noindent Key words and phrases. Graded mesh; finite element algorithm; re-entrant corners; basis function; interpolation.
\end{abstract}

\vspace{10mm}

\section{Introduction}
In this paper, we consider the following stationary state Poisson equation with Dirichlet boundary condition

\begin{equation}\label{e1}
  -\Delta u  = f  \hspace{0.3cm} \mbox{in} \hspace{0.3cm} \Omega \hspace{1.3cm} u  = 0   \hspace{0.3cm} \mbox{on} \hspace{0.3cm} \partial \Omega  
\end{equation}

where, the Laplace operator $\Delta = \frac{\partial^2}{\partial x^2}+ \frac{\partial^2}{\partial y^2} $ and $\Omega$ is a bounded polygonal domain. In this work, first,  we validate existing theoretical results by solving 2D Poisson equation using linear finite elements for convex domains. The main work in this research is to present a graded mesh algorithm that enables us to capture the singular behavior of the numerical solution due to re-entrant corners on non-convex domains. Then we solve the 2D Poisson equation using the finite element method which is a widely used numerical technique for solving differential equations that arise in mathematical modeling and engineering.

The Poisson problem has applications in engineering and applied mathematics including heat conduction, electrostatics, gravity potential, fluid dynamics, and many other fields. However, solving these problems numerically presents major computational difficulties due to complexities in the domains such as re-entrant corners, cracks, and discontinuities of the solution along the boundaries, and due to the singular source function $f$.  Even when an exact solution can be obtained, a numerical solution may be preferable, especially if the exact solution is very complicated. Our focus in this work is to solve the Poisson equation in the presence of re-entrant corners at the vertices of $\Omega$ where some of the interior angles are greater than $\pi$. 

By the regularity theory, the solution $u$ is in $H^{1+\beta}(\Omega)$ with the regularity index $\beta = min (\frac{\pi}{\alpha_{i}}, 1)$ , where $\alpha_{i}$ are interior angles of the polygonal domain $\Omega$. It is easy to see that when the maximum interior angle
is larger than $\pi$, i.e., $\Omega$ is non-convex, $u \not\in H^{2}(\Omega) $ and thus the finite element approximation based on quasi-uniform grids will not produce the optimal convergence rate. Graded meshes near the singular vertices are employed to recover the optimal convergence rate. Such meshes can be
constructed based on a priori estimates \cite{Ivo}, \cite{Babuska}, \cite{Bacuta}, \cite{YUN}, \cite{Huang}, \cite{HLi}, \cite{Raugel} or on a posteriori analysis \cite{Binev}, \cite{Cascon}, \cite{Stevenson}.
In this paper, we shall consider the approach used in \cite{Bacuta},\cite{HLi},\cite{charu1a} and in particular, focus on the linear finite element approximation of (\ref{e1}).

Instead of standard Sobolev spaces, we here use weighted Sobolev spaces to prove the results
on graded meshes for corner singularities. 
In \cite{Zi-Cai}, \cite{LiZC}, knowledge of singular expansions of the solution near the vertices is used to prove super convergence on rectangular meshes. Also in \cite{Wu}, the knowledge of singular expansions of the solution near the vertices is
used to justify the super-convergence of recovered gradients on adaptive grids obtained from a
posteriori processes.
 We use weighted Sobolev spaces to prove the supe- convergence of the solution on a class of graded meshes for corner singularities, which can be
generated by a simple and explicit process. Since the singular expansion is not required in our
analysis, it is possible to extend our results to other singular problems (transmission problems,
Schrodinger type operators, and many other singular operators from physics) \cite{HLi}, \cite{Hengguang}, which can be treated in similar weighted Sobolev spaces.
Throughout this paper, by $x\lesssim y$, we mean $x\leq Cy$, for a generic constant $C > 0$, and by
$x \simeq y$, we mean $x\lesssim y$  and $y\lesssim x$. All constants hidden in this notation are independent of
the problem size N and the solution. However, they may depend on the shape of $\Omega$, and on
other parameters which will be specified in the context.

\begin{remark}
  For simplicity, the current paper focuses on analyzing a 2-dimensional Poisson problem with linear finite elements. However, the analysis could be extended to 3 dimensions and higher-order finite elements although this may present some challenges. Additionally, the problem could be expanded to include non-homogeneous boundary conditions through a simple linear transformation.  
\end{remark}

The rest of the article is organized as follows: In section 2, we present the standard finite element method and  $H^1$ and $L^2$ error estimate results for the 2D Poisson equation under a convex domain. In section 3, we introduced weighted Sobolev spaces and a graded mesh algorithm to solve the Poisson equation on non-convex domains using linear finite elements. We also present the $H^1$ and $L^2$ error estimate results for non-convex domains. In section 4, we present numerical results to validate our theoretical results and a conclusion is the section 5. Throughout the following text, the generic positive constants $C$ may take different values in different formulas but it is always independent of the mesh.
The rest of the article is organized as follows: In section 2, we present the standard finite element method and  $H^1$ and $L^2$ error estimate results for the 2D Poisson equation under a convex domain. In section 3, we introduced weighted Sobolev spaces and a graded mesh algorithm to solve the Poisson equation on non-convex domains using linear finite elements. We also present the $H^1$ and $L^2$ error estimate results for non-convex domains. In section 4, we present numerical results to validate our theoretical results and a conclusion is the section 5. Throughout the following text, the generic positive constants $C$ may take different values in different formulas but it is always independent of the mesh.

\section{Finite element method}
In this section, we will present a basic finite element algorithm, its well-posedness, and its regularity for Poisson's equation. We also present $H^1$ and $L^2$ error estimate results for the 2D Poisson equation (\ref{e1}) for convex polygonal domains for linear finite elements. 

\subsection{Finite Element Algorithm}
The Poisson equation under consideration is as follows: Let $\Omega\subset \mathbb R^2$ be a polygonal domain. Consider the Poisson problem
\begin{eqnarray}\label{ch2poi}
-\Delta u=f \quad {\rm{in}} \ \Omega,\qquad \quad u=0 \quad {\rm{on}}  \ \pa\Omega,
\end{eqnarray}

We denote by $H^m(\Omega)$ for an integer $m\geq 0$, the Sobolev space that consists of square-integrable functions whose $i$th weak derivatives are also square-integrable for $0\leq i\leq m$. For $s>0$ that is not an integer, we denote by $H^s(\Omega)$ the fractional order Sobolev space. For $\tau\geq 0$, $H_0^\tau(\Omega)$ represents the closure in $H^\tau(\Omega)$ of the space of $C^\infty$ functions with compact supports in $\Omega$, and $H^{-\tau}(\Omega)$ represents the dual space of $H_0^\tau(\Omega)$.
Let $L^2(\Omega):=H^0(\Omega)$. We shall denote the norm $\|\cdot\|_{L^2(\Omega)}$ by $\|\cdot\|$ when there is no ambiguity about the underlying domain.

By applying Green's formulas, the variational formulation for the Poisson problem (\ref{ch2poi}) can be written as:
\begin{eqnarray}\label{ch2poivary}
a(u,v):=\int_\Omega \nabla u\nabla v dx=\int_\Omega fv dx=(f,v),\quad \forall v\in H_0^1(\Omega).
\end{eqnarray}

\noindent The finite element discretized  Poisson problem then reads: find the solution ${u}_n  \in V_n^k$ of the Poisson equation
\be\label{al312poi}
\bal
(\nabla {u}_n, \nabla {v})  = & \langle{f},{v} \rangle \quad \forall {v} \in V_n^k.\\
\eal
\ee 

\begin{figure}[H]
\centering
\includegraphics[width=0.5\textwidth]{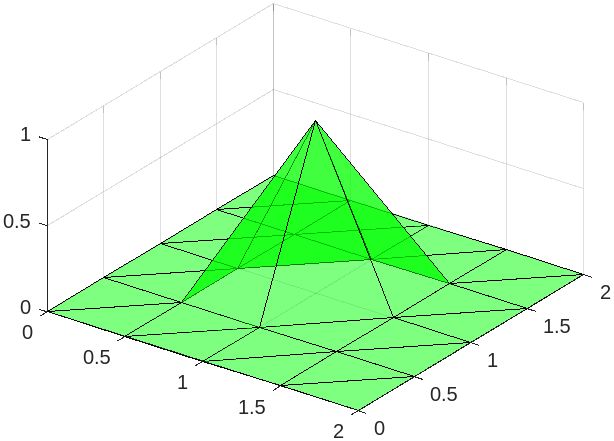}
\caption{Linear hat basis function in 2D}
\end{figure}\label{bas1}

Denote, $\phi$ be the 2D linear Lagrange basis functions as can be seen from the figure (1). Then we can define the basic finite element algorithm as follows: 

\begin{algorithm}[H]
\caption{2D Finite Element Algorithm.}

\begin{algorithmic}
\vspace{0.5cm}

\item[\textbf{Step 1:}]  Create a triangulation : $\mathcal{T}$ of $\Omega \subset R^2$ and define the corresponding space of continuous piece-wise linear functions $V_{h,0}$ with the hat function basis $\{\phi_i\}^{n_i}_{i=1}$.

\item[\textbf{Step 2:}] Generate the $n_i \times n_i$ stiffness matrix $S$ and the $n_i \times 1$ load vector $b$, with entries
        \begin{align*}
            S_{ij}= \int_\Omega \nabla{\phi_j}.\nabla{\phi_i}\;dx,\;\;\;\;b_i=\int_\Omega f\phi_i\;dx.\
        \end{align*}
\item[\textbf{Step 3:}] Solve the linear system of equations
        \begin{align*}
            A\xi=b
        \end{align*}
\item[\textbf{Step 4:}] Write the finite element solution $u_h$ as a linear combination of hat basis functions
        \begin{align*}
            u_h=\sum_{j=1}^{n_i}{\xi_j}\;{\phi_j}
        \end{align*}      
\end{algorithmic}  

\end{algorithm}

\subsection{Well-posedness and Regularity}

\begin{lemma}{(Lax-Milgram)}\label{ch2lax}
Let V be a Hilbert space, let $a(\cdot,\cdot):V\times V\xrightarrow[\textbf{}]{}R $ be a continuous V elliptic bilinear form, and $f:V\xrightarrow[]{}R$ be a continuous linear form. Then the abstract variational problem: Find u such that 
\begin{eqnarray}\label{ch2varypoi}
u \in V \quad {\rm{and}} \quad  v \in V \quad  a(u,v)= f(v) 
\end{eqnarray}
has one and only one solution.
\end{lemma}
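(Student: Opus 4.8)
The plan is to reduce the variational problem to an operator equation in $V$ and solve it by the Banach fixed point theorem. First I would invoke the Riesz representation theorem twice, with $(\cdot,\cdot)$ denoting the inner product of $V$. Since for each fixed $u \in V$ the map $v \mapsto a(u,v)$ is a continuous linear functional on $V$ (continuity of $a$ gives $|a(u,v)| \leq M\|u\|\,\|v\|$ for some $M>0$), there is a unique element $Au \in V$ with $a(u,v) = (Au, v)$ for all $v \in V$; this defines a map $A : V \to V$. Linearity of $a$ in its first argument makes $A$ linear, and taking $v = Au$ shows $\|Au\| \leq M\|u\|$, so $A$ is bounded. Applying Riesz to $f$ as well produces $F \in V$ with $f(v) = (F,v)$ for all $v$. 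The abstract problem \eqref{ch2varypoi} is then equivalent to solving the single equation $Au = F$ in $V$.

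Next I would use $V$-ellipticity, $a(v,v) \geq \alpha\|v\|^2$ for some $\alpha > 0$, which in operator form reads $(Av, v) \geq \alpha\|v\|^2$. For a parameter $\rho > 0$ to be fixed later, define the affine map $T : V \to V$ by $T v = v - \rho(Av - F)$. A fixed point of $T$ is exactly a solution of $Au = F$, so it suffices to show $T$ is a contraction for a suitable $\rho$. For $v_1, v_2 \in V$, writing $w = v_1 - v_2$ and expanding $\|Tv_1 - Tv_2\|^2 = \|w - \rho A w\|^2 = \|w\|^2 - 2\rho(Aw, w) + \rho^2\|Aw\|^2$, the coercivity and boundedness estimates give $\|Tv_1 - Tv_2\|^2 \leq (1 - 2\rho\alpha + \rho^2 M^2)\|w\|^2$. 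Choosing $\rho \in (0, 2\alpha/M^2)$ makes the factor $1 - 2\rho\alpha + \rho^2 M^2$ strictly less than $1$, so $T$ is a contraction on the complete metric space $V$.

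Finally, the Banach fixed point theorem yields a unique $u \in V$ with $Tu = u$, hence a unique solution of $Au = F$, equivalently of $a(u,v) = f(v)$ for all $v \in V$. This settles existence and uniqueness simultaneously, which is the content of the lemma.

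The step I expect to require the most care is handling the \emph{non-symmetry} of $a$. One cannot simply equip $V$ with $a(\cdot,\cdot)$ as an inner product and apply Riesz directly, since $a$ need not be symmetric; this is precisely why the detour through the operator $A$ and the contraction argument is needed. The delicate quantitative point is verifying that the interval $(0, 2\alpha/M^2)$ is nonempty and that the contraction constant stays below $1$, which hinges on having both the continuity constant $M$ and the ellipticity constant $\alpha$ available as strictly positive numbers.
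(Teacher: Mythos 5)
Your proof is correct and complete. The paper itself offers no proof of this lemma: it is quoted as the classical Lax--Milgram theorem and used as a black box (the standard reference would be Ciarlet's book, which the paper cites elsewhere), so there is no argument in the text to compare yours against. The route you chose --- representing $a(u,\cdot)$ and $f$ via Riesz to obtain the operator equation $Au=F$, then solving it with the contraction $Tv=v-\rho(Av-F)$ for $\rho\in(0,2\alpha/M^2)$ --- is the canonical textbook proof, and every step checks out: the expansion $\|w-\rho Aw\|^2=\|w\|^2-2\rho(Aw,w)+\rho^2\|Aw\|^2$ together with $(Aw,w)\ge\alpha\|w\|^2$ and $\|Aw\|\le M\|w\|$ gives the contraction constant $1-2\rho\alpha+\rho^2M^2<1$ exactly on the interval you specify, and that interval is nonempty because $\alpha>0$ and $M<\infty$. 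Your closing remark is also the right one to make: the detour through $A$ and Banach's fixed point theorem is precisely what handles a non-symmetric $a$, whereas in the symmetric coercive case one could finish immediately by applying Riesz to the inner product $a(\cdot,\cdot)$ itself. One trivial observation: the statement as printed in the paper reads ``$u\in V$ and $v\in V$, $a(u,v)=f(v)$'' where it should say ``for all $v\in V$''; you interpreted it correctly.
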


For a function $u \in H_0^1(\Omega)$, applying the Poincar\'e-type inequality \cite{Grisvard1}, it follows
$$
a(u,u) = \|\nabla u\|^2 = |u|^2_{H^1(\Omega)} \geq C\|u|^2_{H^1(\Omega)}.
$$

Thus, for any $f \in H^{-1}(\Omega)$, we have by the Lax-Milgram Theorem that Equation (\ref{ch2poivary}) admits a unique solution
$
u \in H_0^1(\Omega).
$

The regularity of the solution $u$ depends on the given data $f$ and the domain geometry \cite{Agmon}, \cite{Blum}. Let $\beta = min_i(\pi/\alpha_i, 1)$ where $\alpha_i$
are interior angles of the polygonal domain $\Omega$. By the regularity theory, the solution $u$ is in $H^{1+\beta}
(\Omega)$. Thus the Poisson Equation 
(\ref{ch2poi}) holds the following regularity estimate 

\begin{eqnarray}\label{poireg}
\|u\|_{H^{1+\beta}(\Omega)} \leq C \|f\|_{H^{-1+\beta}(\Omega)}.
\end{eqnarray}

\subsection{Error Estimates} 
Suppose that the mesh $\maT_n$ consists of quasi-uniform triangles with size $h$. 
The interpolation error estimate on $\maT_n$ (see e.g., \cite{Ciarlet}) for any $v \in H^s(\Omega)$, $s>1$,

\begin{equation}
    \| v - v_I \|_{H^l(\Omega)} \leq Ch^{s-l}\|v\|_{H^s(\Omega)},
\end{equation}\label{errinterpoletion}

where $l= 0, 1$ and $v_I\in V_n^k$ represents the nodal interpolation of $v$.

\begin{lemma}\label{lem22} For a given $ f \in H^{-1}(\Omega)$, let $u$ be the solution of the Poisson problem (\ref{ch2poi}), and ${u}_n$ be the linear finite element approximation (\ref{al312poi}) on a  convex polygonal domain with quasi-uniform meshes. Then it follows

\begin{equation}\label{h1err}
||{u}-{u}_n\|_{[H^1(\Omega)]}  \leq Ch.\\
\end{equation}
\end{lemma}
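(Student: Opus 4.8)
The plan is to reduce the $H^1$ error to an interpolation error through the quasi-optimality of the Galerkin method, and then to exploit the fact that convexity forces full $H^2$ regularity of $u$, which is exactly what makes the interpolation estimate deliver the optimal power of $h$.

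First I would record \emph{Galerkin orthogonality}. Since $V_n^k\subset H_0^1(\Omega)$, subtracting the discrete problem (\ref{al312poi}) from the continuous variational problem (\ref{ch2poivary}) gives
$$a(u-u_n,v)=0\qquad\forall\,v\in V_n^k.$$
Next, I would use the coercivity and continuity of $a(\cdot,\cdot)$ (the $V$-ellipticity established via the Poincar\'e inequality in the Lax--Milgram discussion) to obtain C\'ea's lemma: for every $v\in V_n^k$,
$$\|u-u_n\|_{H^1(\Omega)}^2\lesssim a(u-u_n,u-u_n)=a(u-u_n,u-v)\lesssim\|u-u_n\|_{H^1(\Omega)}\,\|u-v\|_{H^1(\Omega)},$$
so that $\|u-u_n\|_{H^1(\Omega)}\lesssim\inf_{v\in V_n^k}\|u-v\|_{H^1(\Omega)}$. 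I would then specialize to $v=u_I$, the nodal interpolant of $u$, and invoke the interpolation estimate stated just before the lemma with $s=2$ and $l=1$:
$$\|u-u_I\|_{H^1(\Omega)}\leq Ch\,\|u\|_{H^2(\Omega)}.$$

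The final and conceptually most important step is to justify that $\|u\|_{H^2(\Omega)}$ is finite together with the bound $\|u\|_{H^2(\Omega)}\leq C\|f\|$. Here convexity is essential: every interior angle satisfies $\alpha_i\leq\pi$, hence $\pi/\alpha_i\geq 1$ and $\beta=\min_i(\pi/\alpha_i,1)=1$, so $H^{1+\beta}(\Omega)=H^2(\Omega)$ and the regularity estimate (\ref{poireg}) applies. Chaining the three displayed bounds yields
$$\|u-u_n\|_{H^1(\Omega)}\lesssim\inf_{v\in V_n^k}\|u-v\|_{H^1(\Omega)}\leq\|u-u_I\|_{H^1(\Omega)}\leq Ch\,\|u\|_{H^2(\Omega)}\lesssim Ch,$$
which is the claimed estimate.

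The main obstacle is not any single algebraic manipulation but rather isolating precisely where convexity is used: it is convexity that upgrades the generic regularity index $\beta$ to $1$ and thereby permits the interpolation estimate to be applied with the optimal exponent $h^{s-l}=h$. On a non-convex domain this step breaks down, since $\beta<1$ forces $u\notin H^2(\Omega)$ and one recovers only the suboptimal rate $h^\beta$ on quasi-uniform meshes; this is exactly the deficiency that the graded-mesh construction of the next section is designed to repair.
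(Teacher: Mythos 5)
Your proposal is correct and follows essentially the same route as the paper: Galerkin orthogonality, quasi-optimality (C\'ea), the nodal-interpolation estimate with $s=2$, $l=1$, and the $H^2$ regularity bound that convexity provides. The only cosmetic difference is that the paper states the optimality as an exact equality in the energy seminorm rather than deriving C\'ea's lemma from coercivity and continuity, and your explicit remark that convexity is what forces $\beta=1$ is a welcome clarification the paper leaves implicit.
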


\begin{proof}: We first derive an important orthogonality result for projections. Let $u$ and $u_h$ be the solution of continuous and discrete equations respectively i.e.
\begin{align*}
    &a(u,v)= \langle f,v \rangle\;\;\;\;\;\;\forall v \in H^{1}_{0}(\Omega),\\ &a(u_h,v)= \langle f,v \rangle\;\;\;\;\;\;\forall v \in V_h.
\end{align*}
Choosing $v \in V_h$ in both equations and subtracting them, we then get an important orthogonality
\begin{equation}
     a(u-u_h,v_h)=0\;\;\;\;\;\;\forall v_h \in V_h,
\end{equation}  
which implies the following optimality of the finite element approximation
\begin{equation}\label{2.2}
     \parallel \nabla(u- u_h) \parallel = \inf_{v_h \in V_h} \parallel \nabla( u- v_h) \parallel 
\end{equation}

Now we replace $v_h$ by the linear nodal interpolation $u_I$ in the equation (\ref{2.2}) which is well defined by the embedding theorems. By (\ref{2.2}), we have
\begin{align*}
   \|{u}-{u}_n\|_{[H^1(\Omega)]} \leq C\parallel \nabla(u- u_h) \parallel \;\leq\; C \parallel \nabla(u- u_I) \parallel \;\lesssim \; Ch \parallel u \parallel_2 \; \lesssim \; C h \parallel f \parallel_{-1} \leq Ch.
\end{align*} 
Here the third inequality is true due to the interpolation error estimate while the fourth inequality is due to the regularity estimate.  
\end{proof}

\begin{lemma}\label{lem23} For a given $ f \in H^{-1}(\Omega)$, let $u$ be the solution of the Poisson problem (\ref{ch2poi}), and ${u}_n$ be the linear finite element approximation (\ref{al312poi}) on a  convex polygonal domain with quasi-uniform meshes. Then it follows
\begin{equation}\label{serrl2}
\|{u}-{u}_n\|_{[L^2(\Omega)]} \leq Ch^2.
\end{equation}
\end{lemma}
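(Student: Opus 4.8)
The plan is to use the Aubin--Nitsche duality argument (the Nitsche trick), which upgrades the $H^1$ estimate of Lemma~\ref{lem22} to the sharper $L^2$ bound by exploiting the extra regularity that convexity provides. Write $e = u - u_n$ for the error; the goal is to control $\|e\|$ (recall $\|\cdot\|$ denotes the $L^2$ norm). First I would introduce the dual (adjoint) problem: find $w \in H_0^1(\Omega)$ such that $a(v,w) = (e,v)$ for all $v \in H_0^1(\Omega)$. Since the bilinear form $a(u,v) = \int_\Omega \nabla u \nabla v\,dx$ is symmetric, this is simply the Poisson problem $-\Delta w = e$ in $\Omega$, $w = 0$ on $\partial\Omega$, whose right-hand side is the error itself.

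Next I would invoke elliptic regularity for the dual solution. Because $\Omega$ is convex, every interior angle satisfies $\alpha_i \le \pi$, so the regularity index is $\beta = 1$, and the estimate \eqref{poireg} yields $w \in H^2(\Omega)$ together with the a priori bound $\|w\|_{H^2(\Omega)} \lesssim \|e\|$. In particular $w$ is continuous by the Sobolev embedding, so its nodal interpolant $w_I \in V_h$ is well defined.

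I would then take $v = e$ in the dual problem to get $\|e\|^2 = (e,e) = a(e,w)$, and subtract the Galerkin orthogonality $a(e, w_I) = 0$ (the same orthogonality established in the proof of Lemma~\ref{lem22}, valid since $w_I \in V_h$) to obtain $\|e\|^2 = a(e, w - w_I)$. Applying the Cauchy--Schwarz inequality to the bilinear form, then the interpolation error estimate with $s = 2$, $l = 1$ on $w$, and finally the $H^1$ error bound $\|\nabla e\| \lesssim h$ of Lemma~\ref{lem22}, I would chain
\[
\|e\|^2 \;\le\; \|\nabla e\|\,\|\nabla(w - w_I)\| \;\lesssim\; h \cdot h\,\|w\|_{H^2(\Omega)} \;\lesssim\; h^2\,\|e\|.
\]
Dividing through by $\|e\|$ gives the desired estimate $\|u - u_n\| \le Ch^2$.

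The only genuinely delicate point is the $H^2$-regularity of the dual solution: the entire gain of one power of $h$ rests on the bound $\|w\|_{H^2(\Omega)} \lesssim \|e\|$, which holds precisely because the domain is convex. On a non-convex domain this degrades to $w \in H^{1+\beta}$ with $\beta < 1$, the duality argument recovers only $h^{1+\beta}$, and that loss is exactly what the graded-mesh construction of Section~3 is designed to repair.
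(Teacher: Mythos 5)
Your proposal is correct and follows essentially the same route as the paper: the Aubin--Nitsche duality argument with the dual problem $-\Delta w = u - u_n$, the convexity-based $H^2$ regularity bound $\|w\|_{H^2(\Omega)} \lesssim \|u-u_n\|$, Galerkin orthogonality to replace $w$ by $w - w_I$, and the interpolation plus $H^1$ error estimates to gain the extra power of $h$. The only cosmetic difference is that you chain all the bounds before dividing by $\|u-u_n\|$, whereas the paper cancels one factor first; the substance is identical.
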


Now we estimate $\parallel u-u_h \parallel$. The main technical is the combination of the duality argument and the regularity result. It is known as the Aubin-Nitsche duality argument or simply “Nitsche’s trick”.

\begin{proof}:  By the $H^2$ regularity result, there exist $w \in H^2(\Omega) \cap H^1_0(\Omega)$ such that
\begin{equation}\label{2.6}
    a(w,v)=(u-u_h,v),\;\;\;\;\;\;\;for\; all\; v \in H^1_0(\Omega),
\end{equation}
and $\parallel w \parallel_2 \;\leq C \parallel u-u_h \parallel.$ choosing $v=u-u_h$ in (\ref{2.6}), we get
\begin{align*}
    \parallel u-u_h \parallel^2 &=a(w,u-u_h)\\
    &=a(w-w_I,u-u_h)\\
    &\leq \parallel \nabla(w- w_I) \parallel\; \parallel \nabla(u- u_h) \parallel\;\;\;\;\;\;\;\;(Orthogonality)\\
    &\lesssim h\parallel w\parallel_2 \;\parallel \nabla(u- u_h) \parallel\\
    &\lesssim h\parallel u-u_h \parallel \parallel \nabla(u- u_h) \parallel\;\;\;\;\;\;\;\;\;\;\;\;(regularity).
\end{align*}
Cancelling one $\parallel u-u_h \parallel,$ from both sides we get
\begin{align*}
  \parallel u-u_h \parallel \leq Ch\parallel \nabla(u-u_h)\parallel \lesssim h^2\parallel u \parallel_2.  
\end{align*}

\end{proof} 
For the estimate in $H^1$ norm, when u is smooth enough, we can obtain the optimal first-order estimate. But for $L^2$ norm, the duality argument requires $H^2$ elliptic regularity, which in turn requires that the polygonal domain be convex. In fact, for a non-convex polygonal domain, it will usually not be true that $\parallel u-u_h \parallel = \mathcal{O}(h^2)$ even if the solution $u$ is smooth.

We are interested in the case when $\Omega \subset R^2$ is concave, and thus the solution of Equation (\ref{ch2poi}) possesses corner singularities at vertices of $\Omega$ where some of the interior angles are greater than $\pi$.
It is easy to see that when the maximum angle is larger than $\pi$, i.e., $\Omega$ is concave, $u \not\in H^2(\Omega)$, and thus the finite element approximation based on quasi-uniform grids will not produce the optimal convergence rate. Thus we introduce graded meshes near the singular vertices to recover the optimal convergence rate.

\begin{figure}[H]
\begin{center}
\begin{tikzpicture}[scale=0.2]
\filldraw[thick,color=black!90, fill=green!25]
(-6,-11) -- (2,-11) -- (0,-2) -- (10,-2) -- (8,7) -- (-8,6) -- (-11,-2) -- (-6,-11);
\draw (5,-3) node {$\theta = 0$};
\draw (4,-6) node {$\theta = \omega$};
\draw (7,4) node {$\Omega$};
\draw[thick] (0,-2) node {$\bullet$} node[anchor = north west] {$Q$};
\end{tikzpicture}
\end{center}
\vspace*{-5pt}
    \caption{Domain $\Omega$ containing one re-entrant corner.}
    \label{fig:Omega}
\end{figure}
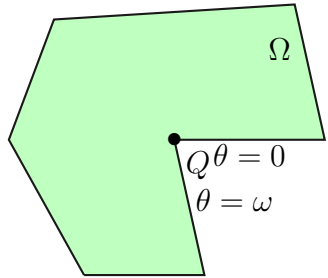

\section{Finite Element Method For Non-Convex Polygonal Domains}

In this section, we shall introduce the weighted Sobolev space $\maK_{\mathbf a}^m(G)$ and provide preliminary results to carry out analysis on graded meshes. On details of weighted Sobolev spaces used here, we refer readers to \cite{Kondratev}, \cite{Bacuta}, \cite{Anna}. Then we use the graded mesh algorithm to improve the convergence rates. To this end, we start with the definition of the weighted Sobolev space.

\subsection{Weighted Sobolev Spaces}

Let, $Q_i$, $i=1,\cdots, N$ are the vertices of domain $\Omega$.
Let $r_i=r_i(x,Q_i)$ be the distance from $x$ to $Q_i$ and let
\begin{eqnarray}\label{eqn.rho}
\rho(x)=\Pi_{1\leq i \leq N} r_i(x,Q_i).
\end{eqnarray}
Let $\mathbf a = (a_1, \cdots,a_i, \cdots, a_N)$ be a vector with $i$th component associated with  $Q_i$. We denote $t+\mathbf a = (t+a_1, \cdots, t+a_N)$, so we have
$$
\rho(x)^{(t+\mathbf a)}=\Pi_{1\leq i \leq N} r_i^{(t+\mathbf a)}(x,Q_i) = \Pi_{1\leq i \leq N} r_i^t(x,Q_i) \Pi_{1\leq i \leq N} r_i^{\mathbf a}(x,Q_i) = \rho(x)^t \rho(x)^{\mathbf{a}}.
$$
Then, we introduce the Kondratiev-type weighted Sobolev spaces for the analysis of the Poisson problem (\ref{e1}).

\begin{definition} \label{wss} (Weighted Sobolev spaces)
For $a\in\mathbb R$, $m\geq 0$, and $G\subset \Omega$,  we define the weighted Sobolev space
$$
\maK_{\mathbf a}^m(G) := \{v|\ \rho^{|\nu|-\mathbf a}\partial^\nu v\in L^2(G), \forall\ |\nu|\leq m \},
$$
where the multi-index $\nu=(\nu_1,\nu_2)\in\mathbb Z^2_{\geq 0}$, $|\nu|=\nu_1+\nu_2$, and $\partial^\nu=\partial_x^{\nu_1}\partial_y^{\nu_2}$.
The $\maK_{\mathbf a}^m(G)$ norm for $v$  is defined by
$$
\|v\|_{\maK_{\mathbf a}^m(G)}=\big(\sum_{|\nu|\leq m}\iint_{G} |\rho^{|\nu|-\mathbf a}\partial^\alpha v|^2dxdy\big)^{\frac{1}{2}}.
$$
\end{definition}

\begin{remark}\label{KHeq}
According to Definition \ref{wss}, in the region that is away from the corners, the weighted space $\maK^m_{\mathbf a}$ is equivalent to the Sobolev space $H^m$. In the neighborhood of $Q_i$, the space $\maK^m_{\mathbf a}(B_i)$ is the equivalent to the Kondratiev space  \cite{Kondratev},\cite{Dauge}, \cite{Grisvard},
$$
\maK_{a_i}^m(B_i) := \{v|\ r_i^{|\nu|-a_i}\partial^\alpha v\in L^2(B_i), \forall\ |\nu|\leq m \},
$$
where $B_i \subset \Omega$ represents the neighborhood of $Q_i$ satisfying $B_i \cap B_j = \emptyset$ for $i\not = j$.
\end{remark}

\subsection{Graded Mesh}

Following \cite{Li3}, \cite{Bacuta}, we now construct a class of suitable graded meshes to obtain the optimal convergence rate of the finite element solution in the presence of the corner singularity in the solution of (\ref{e1}). Starting from an initial triangulation of $\Omega$, we divide each triangle into four
triangles to construct such a sequence of triangulations, which is similar to the regular midpoint refinement. The difference is, in order to attack the corner singularity when we perform
the refinement, we move the middle points of edges towards the singular vertex of $\Omega$. Here a
singular vertex $v_i$  means $Q_i > \pi$.  We now present the construction of graded meshes to improve the convergence rate of the numerical approximation.

\begin{algorithm}
    \caption{Graded Mesh Algorithm}\label{graded}
    \begin{algorithmic}
    
 \item[] Let $\mathcal{T}$ be a triangulation of $\Omega$ with shape-regular triangles. Recall that $Q_i$, $i\;=\;1,...,N$ are the vertices of $\Omega$. Let $AB$ be an edge in the triangulation $\mathcal{T}$ with A and B as the endpoints.Then, in a graded refinement, a new node $D$ on $AB$ is produced according to the following conditions:

        \begin{enumerate}
            \item (Neither $A$ nor $B$ coincides with $Q_i$.) We choose $D$ as the midpoint $(\vert AD \vert =\vert BD \vert)$.
            
       \item ($A$ coincides with $Q_i.)$. We choose $r$ such that $\vert AD \vert = \kappa_{Q_i} \vert AB \vert$, where $\kappa_{Q_i} \in \;(0,0.5)$ is a parameter that will be specified later. See Figure \ref{Fig 2} for example.
        \end{enumerate}

        Then, the graded refinement, denoted by $\kappa(\mathcal{T})$, proceeds as follows. For each triangle $T \in \mathcal{T}$, a new node is generated on each edge of $T$ as described above. Then, $T$ is decomposed into four small triangles by connecting these new nodes. Given an initial mesh $\mathcal{T}_0$ satisfying the condition above, the associated family of graded meshes ${\mathcal{T}_n, n\;\geq\; 0}$ is defined recursively $\mathcal{T}_{n+1}=\kappa(\mathcal{T}_n)$.   
   
  \end{algorithmic}
    
\end{algorithm}

\begin{figure}[h]
\begin{center}
\begin{tikzpicture}[scale=0.3]

\filldraw[color=black!90, fill=green!25]
(-1,1) node[anchor = north] {$x_2$}
-- (-4,7) node[anchor = south] {$x_0$}
-- (-11, 1) node[anchor = north] {$x_1$}
-- (-1,1);

\filldraw[color=black!90, fill=green!25]
(11,1) node[anchor = north] {$x_2$}
-- (8,7) node[anchor = south] {$x_0$}
-- (1, 1) node[anchor = north] {$x_1$}
-- (11,1);

\draw[thick]
(9.5,4)
-- (4.5,4)
-- (6,1) node[anchor = north] {$x_{12}$}
-- (9.5,4);
\draw (3.9,4.3) node {$x_{01}$};
\draw (10,4.3) node {$x_{02}$};

\filldraw[color=black!90, fill=green!25]
(-1,-7) node[anchor = north] {$x_2$}
-- (-4,-1) node[anchor = south] {$x_0$}
-- (-11,-7) node[anchor = north] {$x_1$}
-- (-1,-7);
\draw[green,fill=red] (-4,-1) circle (.2);

\draw[thick]
(-23/4, -5/2)
-- (-6,-7+0.04) node[anchor = north] {$x_{12}$}
-- (-13/4, -5/2)
-- (-23/4, -5/2);
\draw (-6.4,-2.2) node {$x_{01}$};
\draw (-2.7,-2.2) node {$x_{02}$};

\filldraw[color=black!90, fill=green!25]
(11,-7) node[anchor = north] {$x_2$}
-- (8,-1) node[anchor = south] {$x_0$}
-- (1, -7) node[anchor = north] {$x_1$}
-- (11,-7);
\draw[green,fill=red] (8,-1) circle (.2);

\draw[thick]
(25/4, -5/2)
-- (6,-7+0.04) node[anchor = north] {$x_{12}$}
-- (35/4, -5/2)
-- (25/4, -5/2);
\draw (5.6,-2.2) node {$x_{01}$};
\draw (9.3,-2.2) node {$x_{02}$};
\draw[thick] (29/8, -19/4) -- (7/2,-7) -- (49/8,-19/4) -- (29/8, -19/4);
\draw[thick] (49/8,-19/4) -- (59/8,-19/4) -- (15/2,-5/2) -- (49/8,-19/4);
\draw[thick] (59/8,-19/4) -- (17/2,-7) -- (79/8,-19/4) -- (59/8,-19/4);
\draw[thick] (121/16,-11/8) -- (15/2,-5/2) -- (131/16,-11/8) -- (121/16,-11/8);

%

\end{tikzpicture}
\end{center}
\vspace*{-5pt}
\caption{First row: the initial triangle and the midpoint refinement; second row: graded refinements ($\kappa_{Q_i}<0.5$).}
\label{fig.333}
\end{figure}
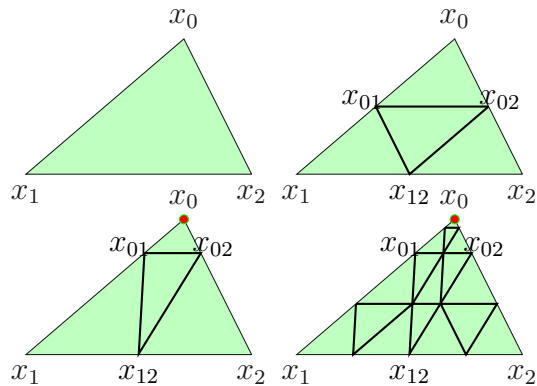

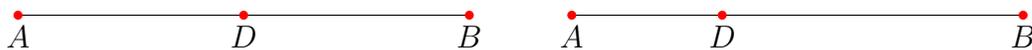
\begin{figure}[h]
\centering
\begin{subfigure}
\centering
\begin{tikzpicture}
\draw (0,0) -- (3,0) -- (6,0);
\filldraw [red] (0,0) circle (1.5pt) ;
\draw (0,0) node[below] {$A$};
\filldraw [red] (3,0) circle (1.5pt) ;
\draw (3,0) node[below] {$D$};
\filldraw [red] (6,0) circle (1.5pt) ;
\draw (6,0) node[below] {$B$};
\end{tikzpicture} 
\end{subfigure}
\hspace{10pt}
\begin{subfigure}
\centering
    \begin{tikzpicture}
\draw (0,0) -- (2,0) -- (6,0);
\filldraw [red] (0,0) circle (1.5pt) ;
\draw (0,0) node[below] {$A$};
\filldraw [red] (2,0) circle (1.5pt) ;
\draw (2,0) node[below] {$D$};
\filldraw [red] (6,0) circle (1.5pt) ;
\draw (6,0) node[below] {$B$}; 
    \end{tikzpicture} 
\end{subfigure} 
\caption{The new node on an edge $AB$. (left): $A \neq Q_i$ and $B \neq Q_i$ (midpoint); 
 (right):  $A=Q_i\; (\vert AB \vert = \kappa_{Q_i} \vert AB \vert ,\; \kappa_{Q_i} < 0.5$).}\label{Fig 2}
\end{figure}

Given a grading parameter $\kappa_{Q_i}$, Algorithm \ref{graded} produces smaller elements near $Q_i$ for better approximation of singular solution. It is an explicit construction of graded meshes based on recursive refinements. See also \cite{Apel}, \cite{Bacuta}, \cite{Hengguang}, \cite{Anna} and references therein for more discussions on the graded mesh.

Note that after $n$ refinements, the number of triangles in the mesh $\maT_n$ is $O(4^n)$, so we denote the mesh size of $\maT_n$ by
\be\label{meshsize}
h = 2^{-n}.
\ee
In Algorithm \ref{graded}, we choose the parameter $\kappa_{Q_i}$ for each vertex $Q_i$ as follows. Given the degree of polynomials $k$, we choose
\be\label{kappainit}
\kappa_{Q_i}=2^{-\frac{\theta}{a_i}}\left(\leq \frac{1}{2}\right),
\ee
where $a_i>0$
and $\theta$ could be any possible constants satisfying
\be\label{thetarange}
a_i \leq \theta \leq \min\{k,m\}.
\ee
In (\ref{thetarange}), if we take $a_i=\theta$, the grading parameter $\kappa_{Q_i}=\frac{1}{2}$.

\begin{figure}[ht]
\centering
\subfigure[]{\includegraphics[width=0.32\textwidth]{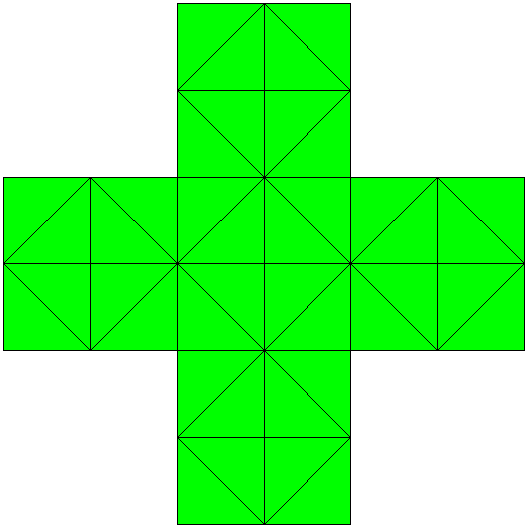}} \hspace{1cm}
\subfigure[]{\includegraphics[width=0.32\textwidth]{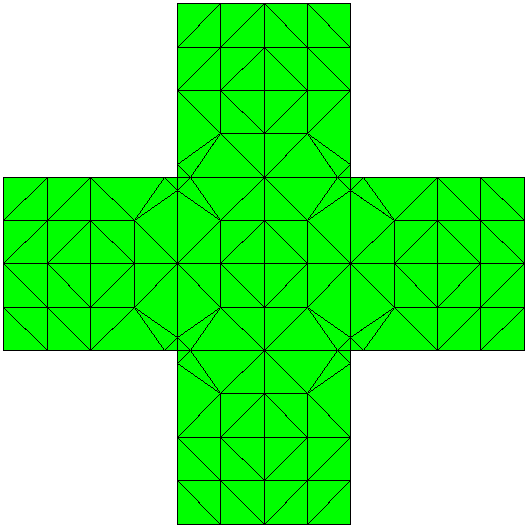}}\\
\subfigure[]{\includegraphics[width=0.32\textwidth]{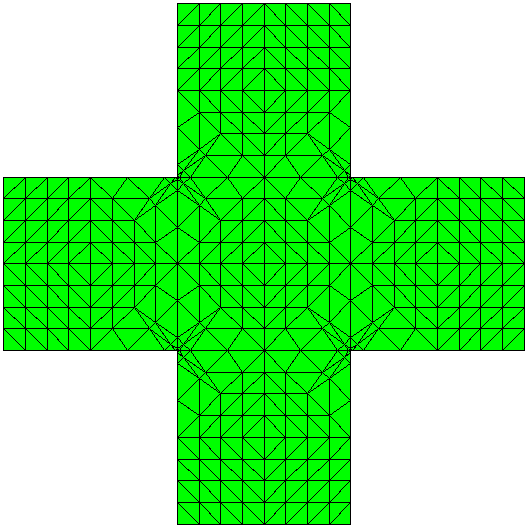}} \hspace{1cm}
\subfigure[]{\includegraphics[width=0.32\textwidth]{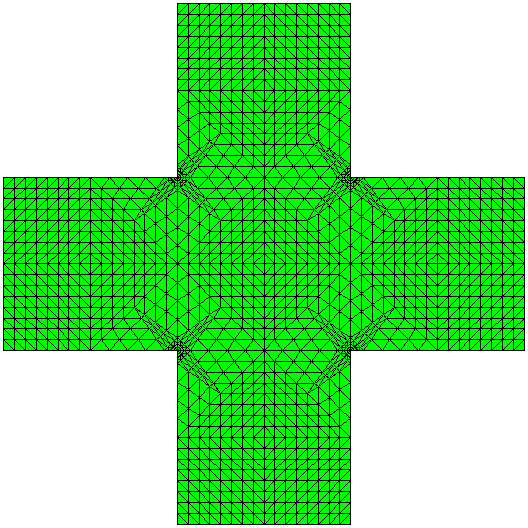}}
\caption{(a) Initial mesh; (b) one refinement; (c) two refinements; (d) three refinements.}\label{comp10}
\end{figure}

Figure \ref{comp10} shows how the graded mesh refinements work on a domain with four re-entrant corners with gradient parameter $\kappa=0.1$ for three consecutive mesh refinements for a given initial mesh 4(a)

\subsection{Error Estimates}
\begin{lem}\label{r1r3}
Let $T_{(0)}\in\mathcal T_{0}$ be an initial triangle of the triangulation $\mathcal T_n$ in Algorithm \ref{graded} with grading parameters $\kappa_{Q_i}$ given by Equation  (\ref{kappainit}). For $m\geq 1, k \geq 1$, we denote $v_I \in V_n^{k}$ the nodal interpolation of $v \in \maK_{\mathbf a+1}^{m+1}(\Omega)$. If $\bar T_{(0)}$ does not contain any vertices $Q_i$, $i=1,\cdots,N$, then
\begin{equation*}
\|v-v_I\|_{H^1(T_{(0)})}\leq Ch^{\min\{k,m\}}
\end{equation*}
where $h = 2^{-n}$.
\end{lem}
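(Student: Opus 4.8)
The plan is to reduce the estimate on the initial triangle $T_{(0)}$ to a standard interpolation estimate by exploiting the key hypothesis that $\bar T_{(0)}$ is away from all singular vertices. The crucial observation, per Remark \ref{KHeq}, is that on any region bounded away from the corners $Q_i$ the weight $\rho$ is bounded above and below by positive constants, so the weighted space $\maK_{\mathbf a+1}^{m+1}(T_{(0)})$ coincides with the ordinary Sobolev space $H^{m+1}(T_{(0)})$ with equivalent norms. Hence $v \in H^{m+1}(T_{(0)})$ and the classical interpolation theory of Section 2 becomes applicable locally.

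\textbf{First}, I would make precise the equivalence of norms. Since $\bar T_{(0)}$ contains no vertex, there is a constant $c>0$ (depending on the distance from $T_{(0)}$ to the nearest $Q_i$, hence on the fixed initial mesh $\mathcal T_0$) with $c \le \rho(x) \le C$ on $T_{(0)}$. Writing out Definition \ref{wss}, each factor $\rho^{|\nu|-\mathbf a}$ appearing in the $\maK$-norm is then comparable to a fixed positive constant, uniformly in $\nu$, so
\[
\|v\|_{H^{m+1}(T_{(0)})} \;\simeq\; \|v\|_{\maK_{\mathbf a+1}^{m+1}(T_{(0)})},
\]
with the hidden constants independent of $n$. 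This transfers the regularity assumption on $v$ into ordinary Sobolev regularity on $T_{(0)}$.

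\textbf{Next}, I would track how the refinement behaves on $T_{(0)}$. Because $T_{(0)}$ is away from every singular vertex, condition (1) of Algorithm \ref{graded} applies to all of its edges and descendants: the refinement is pure midpoint refinement, so after $n$ steps the sub-triangles of $T_{(0)}$ form a shape-regular, quasi-uniform mesh of $T_{(0)}$ with mesh size $h = 2^{-n}$. On such a quasi-uniform mesh the interpolation error estimate (\ref{errinterpoletion}) applies with $l=1$ and $s = m+1$, giving
\[
\|v-v_I\|_{H^1(T_{(0)})} \;\le\; C h^{m}\,\|v\|_{H^{m+1}(T_{(0)})}.
\]
Combining this with the norm equivalence above and noting $\min\{k,m\}\le m$ (so $h^{m}\le h^{\min\{k,m\}}$ for $h\le 1$), or more sharply interpolating at regularity order $\min\{k,m\}+1$ to match the polynomial degree $k$, yields the claimed bound $\|v-v_I\|_{H^1(T_{(0)})}\le C h^{\min\{k,m\}}$.

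\textbf{The main obstacle} I anticipate is bookkeeping the role of the polynomial degree $k$: the interpolant $v_I$ lies in $V_n^k$, and the standard estimate (\ref{errinterpoletion}) can only deliver convergence order $\min\{k, s-1\}$, since a degree-$k$ interpolant cannot exceed order $k+1$ in approximation regardless of how smooth $v$ is. One must therefore invoke (\ref{errinterpoletion}) with Sobolev exponent $s = \min\{k,m\}+1$ rather than $s=m+1$, which is legitimate because $v\in H^{m+1}\subset H^{\min\{k,m\}+1}$ on $T_{(0)}$. Care is also needed to confirm that the constant $C$ depends only on the shape-regularity and the fixed separation of $T_{(0)}$ from the corners, and not on $n$; since the initial mesh is fixed and midpoint refinement preserves shape-regularity, this uniformity holds, completing the argument.
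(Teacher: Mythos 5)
Your proposal is correct and follows essentially the same route as the paper's own proof: identify $\maK_{\mathbf a+1}^{m+1}$ with $H^{m+1}$ on $T_{(0)}$ via Remark \ref{KHeq} since the triangle is away from all $Q_i$, observe that the refinement restricted to $T_{(0)}$ is quasi-uniform midpoint refinement of size $O(2^{-n})$, and apply the standard interpolation estimate. Your added care about invoking the estimate at Sobolev order $\min\{k,m\}+1$ to respect the polynomial degree $k$ is a detail the paper glosses over, but it does not change the argument.
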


\begin{proof}: 
If $\bar T_0$ does not contain any vertices $Q_i$ of the domain $\Omega$, we have $v\in \maK_{\mathbf a+1}^{m+1}(\Omega) \subset H^{m+1}(T_{(0)})$ (see Remark \ref{KHeq}) and the mesh on $T_{(0)}$ is quasi-uniform (Algorithm \ref{graded}) with size $O(2^{-n})$. Therefore, based on the standard interpolation error estimate, we have
\begin{eqnarray}\label{1.1}
\|v-v_I\|_{H^1(T_{(0)})}\leq Ch^{\min\{k,m\}}\|v\|_{H^{m+1}(T_{(0)})}.
\end{eqnarray}

\end{proof}

We now study the interpolation error in the neighborhood $Q_i$, $i=1,\cdots, N$.
In the rest of this subsection, we assume $T_{(0)}\in\mathcal T_0$ is an initial triangle such that the $i$th vertex $Q_i$ is a vertex of $T_{(0)}$.
We first define mesh layers on $T_{(0)}$ which are collections of triangles in $\mathcal T_n$.

\begin{definition} (Mesh layers) Let $T_{(t)}\subset T_{(0)}$ be the triangle in $\mathcal T_t$, $0\leq t\leq n$, that is attached to the singular vertex $Q_i$ of $T_{(0)}$. For $0\leq t<n$, we define the $t$th mesh layer of $\mathcal T_n$ on $T_{(0)}$ to be the region $L_{t}:=T_{(t)}\setminus T_{(t+1)}$; and for $t=n$, the $n$th layer is $L_{n}:=T_{(n)}$.  See Figure \ref{fig.layer} for example.
\end{definition}

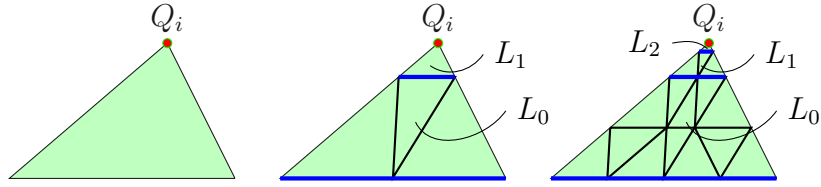
\begin{figure}[ht]
\begin{center}
\begin{tikzpicture}[scale=0.30]
\filldraw[color=black!90, fill=green!25]
(-13,-7)
-- (-16,-1) node[anchor = south] {$Q_i$}
-- (-23, -7)
-- (-13,-7);
\draw[green,fill=red] (-16,-1) circle (.2);

\filldraw[color=black!90, fill=green!25]
(-1,-7)
-- (-4,-1) node[anchor = south] {$Q_i$}
-- (-11,-7)
-- (-1,-7);
\draw[green,fill=red] (-4,-1) circle (.2);

\draw[thick]
(-23/4, -5/2)
-- (-6,-7+0.04)
-- (-13/4, -5/2)
-- (-23/4, -5/2);

\draw[blue, ultra thick] (-13/4, -5/2) -- (-23/4, -5/2);
\draw[blue, ultra thick] (-11,-7) -- (-1,-7);
\draw (-5,-4.5) to[out=-65,in=235] (-1,-4) node[anchor = west] {$L_0$};
\draw (-4.35,-2) to[out=-35,in=235] (-2,-1.5) node[anchor = west] {$L_1$};
\filldraw[color=black!90, fill=green!25]
(11,-7)
-- (8,-1) node[anchor = south] {$Q_i$}
-- (1, -7)
-- (11,-7);
\draw[green,fill=red] (8,-1) circle (.2);

\draw[thick]
(25/4, -5/2)
-- (6,-7+0.04)
-- (35/4, -5/2)
-- (25/4, -5/2);

\draw[thick] (29/8, -19/4) -- (7/2,-7) -- (49/8,-19/4) -- (29/8, -19/4);
\draw[thick] (49/8,-19/4) -- (59/8,-19/4) -- (15/2,-5/2) -- (49/8,-19/4);
\draw[thick] (59/8,-19/4) -- (17/2,-7) -- (79/8,-19/4) -- (59/8,-19/4);
\draw[thick] (121/16,-11/8) -- (15/2,-5/2) -- (131/16,-11/8) -- (121/16,-11/8);

\draw[blue, ultra thick] (131/16,-11/8) -- (121/16,-11/8);
\draw[blue, ultra thick] (-13/4+12, -5/2) -- (-23/4+12, -5/2);
\draw[blue, ultra thick] (-11+12,-7) -- (-1+12,-7);
\draw (-5+12,-4.5) to[out=-65,in=235] (-1+12,-4) node[anchor = west] {$L_0$};
\draw (-4.35+12,-2) to[out=-35,in=235] (-2+12,-1.5) node[anchor = west] {$L_1$};
\draw (127/16,-1.25) to[out=500,in=10] (102/16,-1) node[anchor = east] {$L_2$};
\end{tikzpicture}
\end{center}
\vspace*{-5pt}
\caption{The initial triangle $T_{(0)}$ with singular vertex $Q_i$ and mesh layers.}
\label{fig.layer}
\end{figure}

\begin{remark}
The triangles in $\mathcal T_n$ constitute $n$ mesh layers on $T_{(0)}$. According to Algorithm \ref{graded} and the choice of grading parameters $\kappa_{Q_i}$ given by Equation  (\ref{kappainit}), the mesh size in the $t$th  layer $L_t$ is
\begin{equation}\label{eqn.size}O(\kappa_{Q_i}^t2^{t-n}). \end{equation}
Meanwhile, the weight function $\rho$ in Equation  (\ref{eqn.rho}) satisfies
\begin{eqnarray}\label{eqn.dist}
\rho=O(\kappa_{Q_i}^t) \ \ \ {\rm{in\ }} L_t\ (0\leq t< n) \qquad {\rm{and}}  \qquad \rho \leq C\kappa_{Q_i}^n \ \ \ {\rm{in\ }} L_n.
\end{eqnarray}
\end{remark}

Although the mesh size varies in different layers, the triangles in $\mathcal T_n$ are shape regular. In addition, using the local Cartesian coordinates such that $Q$ is the origin,  the mapping
\begin{eqnarray}\label{eqn.map}
\mathbf B_{t}= \begin{pmatrix}
  \kappa_{Q_i}^{-t}   &   0 \\
  0    &   \kappa_{Q_i}^{-t} \\
\end{pmatrix},\qquad 0\leq t\leq n
\end{eqnarray}
is a bijection between $L_t$ and $L_0$ for $0\leq t<n$ and a bijection between $L_n$ and $T_{(0)}$. We call $L_0$ (resp. $T_{(0)}$) the reference region associated to $L_t$ for $0\leq t<n$ (resp. $L_n$).

With the mapping (\ref{eqn.map}), we have that for any point $(x, y)\in L_t$, $0\leq t\leq n$, the image point $(\hat x, \hat y):=\mathbf B_t(x,y)$ is in its reference region. We then introduce the following result from \cite[Lemma 4.5]{Peimeng}.

\begin{remark}\label{dilation}
For $0\leq t\leq n$,
given a function $v(x, y) \in \maK_{a}^{l}(L_t)$, the function $\hat v(\hat x, \hat y):=v(x, y)$ belongs to $\maK_{a}^{l}(\hat L)$, where $(\hat x, \hat y):=\mathbf B_t(x,y)$, $\hat L=L_0$ for $0\leq t< n$, and $\hat L=T_{(0)}$ for $t=n$. Then, it follows
\begin{equation*}
\|\hat v(\hat x, \hat y)\|_{\maK_{a}^{l}(\hat L)} = \kappa_{Q_i}^{t(a-1)} \|v(x,y)\|_{\maK_{a}^{l}(L_i)}.
\end{equation*}
\end{remark}

We then derive the interpolation error estimate in each layer.

\begin{lem}\label{TNtri}
For $k \geq 1, m \geq 1$, set $\kappa_{Q_i}$ in Equation (\ref{kappainit}) with $\theta$ satisfying (\ref{thetarange}) for the graded mesh on $T_{(0)}$.
Let $h:=2^{-n}$, then in the $t$th layer $L_t$ on $T_{(0)}$, $0\leq t<n$, if $v_I \in V_n^k$ be the nodal interpolation of $v\in \maK_{\mathbf a+1}^{m+1}(\Omega)$, it follows
\be\label{projgh1}
|v-v_{I}|_{H^1(L_t)} \leq Ch^{\theta}\|v\|_{\maK_{a_i+1}^{m+1}(L_t)}
\ee

\end{lem}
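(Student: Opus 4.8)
The plan is to reduce the estimate on the layer $L_t$ to a standard interpolation estimate on the reference region $L_0$ via the dilation $\mathbf B_t$ of Equation (\ref{eqn.map}), exploiting that $L_0$ sits away from the singular vertex $Q_i$, where by Remark \ref{KHeq} the weighted norm is comparable to an ordinary Sobolev norm and the scaled mesh is quasi-uniform.

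First I would transform to the reference region. Since $\mathbf B_t$ is a bijection from $L_t$ onto $L_0$ that scales isotropically by $\kappa_{Q_i}^{-t}$, the image of the mesh on $L_t$ is a quasi-uniform mesh on $L_0$; by Equation (\ref{eqn.size}) its size is $\hat h=O(\kappa_{Q_i}^{-t}\cdot\kappa_{Q_i}^{t}2^{t-n})=O(2^{t-n})$. Because nodal interpolation with fixed polynomial degree $k$ commutes with an affine change of variables, $\widehat{v_I}$ is exactly the degree-$k$ nodal interpolant $\hat v_I$ of $\hat v$ on the scaled mesh. Moreover the $H^1$-seminorm in two dimensions is invariant under isotropic scaling — this is precisely the case $a=1$, $l=1$ of Remark \ref{dilation}, since $|\cdot|_{\maK_1^1}=|\cdot|_{H^1}$ and $\kappa_{Q_i}^{t(a-1)}=1$ — so that
\begin{equation*}
|v-v_I|_{H^1(L_t)} = |\hat v-\hat v_I|_{H^1(L_0)}.
\end{equation*}

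Next, on $L_0$ the distance $r_i$ to $Q_i$ is bounded above and below by positive constants, so $\rho$ is comparable to a constant and, by Remark \ref{KHeq}, $\|\hat v\|_{\maK_{a_i+1}^{m+1}(L_0)}\simeq\|\hat v\|_{H^{m+1}(L_0)}$. Applying the standard interpolation error estimate on the quasi-uniform mesh of size $\hat h=O(2^{t-n})$ (as used in Lemma \ref{r1r3}) gives
\begin{equation*}
|\hat v-\hat v_I|_{H^1(L_0)} \leq C\,\hat h^{\min\{k,m\}}\|\hat v\|_{H^{m+1}(L_0)} \leq C\,2^{(t-n)\min\{k,m\}}\|\hat v\|_{\maK_{a_i+1}^{m+1}(L_0)}.
\end{equation*}
I would then pull the weighted norm back to $L_t$ using Remark \ref{dilation} with $a=a_i+1$, $l=m+1$, which contributes the factor $\kappa_{Q_i}^{t((a_i+1)-1)}=\kappa_{Q_i}^{ta_i}$:
\begin{equation*}
\|\hat v\|_{\maK_{a_i+1}^{m+1}(L_0)} = \kappa_{Q_i}^{ta_i}\,\|v\|_{\maK_{a_i+1}^{m+1}(L_t)}.
\end{equation*}

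Finally I would collect the powers. Writing $s:=\min\{k,m\}$ and substituting $\kappa_{Q_i}=2^{-\theta/a_i}$ from Equation (\ref{kappainit}), the accumulated prefactor is $2^{(t-n)s}\kappa_{Q_i}^{ta_i}=2^{(t-n)s}2^{-\theta t}=2^{(s-\theta)(t-n)-\theta n}$. The grading constraint (\ref{thetarange}) gives $\theta\leq s$, so $s-\theta\geq 0$, while $t<n$ gives $t-n<0$; hence $(s-\theta)(t-n)\leq 0$ and the prefactor is at most $2^{-\theta n}=h^{\theta}$, which yields the claimed bound. I expect the main obstacle to be bookkeeping rather than deep analysis: one must match the mesh size on the reference region correctly and justify the norm equivalence on $L_0$, after which the decisive point is recognizing that the sign condition $(s-\theta)(t-n)\leq 0$ — that the refinement toward $Q_i$ exactly compensates the loss of regularity layer by layer — is guaranteed precisely by the choice $a_i\leq\theta\leq\min\{k,m\}$.
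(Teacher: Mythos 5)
Your proposal is correct and follows essentially the same route as the paper's own proof: map $L_t$ to the reference layer $L_0$ via $\mathbf B_t$, use the scale-invariance of the $H^1$-seminorm, apply the standard interpolation estimate on the quasi-uniform image mesh of size $O(2^{t-n})$, pull the weighted norm back with Remark \ref{dilation} to pick up the factor $\kappa_{Q_i}^{ta_i}=2^{-\theta t}$, and conclude from $\theta\leq\min\{k,m\}$ and $t<n$. You supply somewhat more of the bookkeeping (the scaled mesh size, the norm equivalence on $L_0$, commutation of interpolation with the dilation) than the paper does, but the argument is the same.
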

\begin{proof}: 
For $L_t$ associated with $Q_i$, $0\leq t<n$, the space $\maK_{a_i+1}^{m+1}(L_t)$ is equivalent to $H^{m+1}(L_t)$. Therefore, $v$ is a continuous function in $L_t$.
For any point $(x, y)\in L_t$, let $(\hat x, \hat y)=\mathbf B_t(x,y)\in L_0$. For $v(x, y)$ in $L_t$, we define $\hat v(\hat x, \hat y):=v(x, y)$ in $L_0$.

Using the standard interpolation error estimate, the scaling argument,  the estimate in (\ref{eqn.size}), and the mapping in (\ref{eqn.map}), we have
\begin{eqnarray*}
 |v-v_I|_{H^1(L_t)}&=& |\hat v-\hat v_I|_{H^1(L_0)}\leq C 2^{(t-n)\mu}\|\hat v\|_{\maK_{a_i+1}^{m+1}(L_0)}
 \leq C 2^{(t-n)\mu}\kappa_{Q_i}^{a_it}\|v\|_{\maK_{a_i+1}^{m+1}(L_t)},
\end{eqnarray*}
where we have used Lemma \ref{dilation} in the last inequality.
Since $\kappa_{Q_i} = 2^{-\frac{\theta}{a_i}}$,
so we have
$\kappa_{Q_i}^{a_it} = 2^{-\theta t}$.
Set $\mu=\min\{k,m\}$, by $\theta \leq \mu$ from (\ref{thetarange}) and $t<n$, we have $2^{(n-t)(\theta-\mu)}<2^0=1$.
Therefore, we have the estimate
\begin{eqnarray*}
 |v-v_I|_{H^1(L_t)}
 &\leq& C 2^{(t-n)\mu-\theta t}\|v\|_{\maK_{a_i+1}^{m+1}(L_t)} = C2^{-n\theta} 2^{(n-t)(\theta-\mu)}\|v\|_{\maK_{a_i+1}^{m+1}(L_t)}\\
 &\leq& C2^{-n\theta} \|v\|_{\maK_{a_i+1}^{m+1}(L_t)} \leq Ch^{\theta} \|v\|_{\maK_{a_i+1}^{m+1}(L_t)}.
\end{eqnarray*}

\end{proof}

Before deriving the interpolation error estimate in the last layer $L_n$ on $T_{(0)}$, we first introduce the following results.

\begin{remark}\label{Lnrela}
For $\forall v \in \maK_a^l(L_n)$, if $0\leq l'\leq l$ and $a'\leq a$, then it follows
\be
\|v\|_{\maK_{a'}^{l'}(L_n)}\leq C \kappa_{Q_i}^{n(a-a')} \|v\|_{\maK_{a}^{l}(L_n)}.
\ee
\end{remark}

\begin{remark}
    
\label{HtoKbdd}
For $\forall v \in \maK_a^l(L_n)$ , if $a\geq l$, then it follows that
\be\label{HtoKbddfor}
\|v\|_{H^l(L_n)} \leq C \kappa_{Q_i}^{n(a-l)} \|v\|_{\maK_a^l(L^n)}.
\ee
\end{remark}

\begin{lem}\label{TNtri2}
For $k \geq 1, m \geq 1$, set $\kappa_{Q_i}$ in (\ref{kappainit}) with $\theta$ satisfying (\ref{thetarange}) for the graded mesh on $T_{(0)}$.
Let $h:=2^{-n}$, then in the $n$th layer $L_n$ on $T_{(0)}$ for $n$ sufficiently large, if $v_I \in V_n^k$ be the nodal interpolation of $v\in \maK_{\mathbf a+1}^{m+1}(\Omega)$, it follows
\be\label{projgh12}
|v-v_{I}|_{H^1(L_n)} \leq Ch^{\theta}\|v\|_{\maK^{m+1}_{{a_i}+1}(L_{n})}
\ee

\end{lem}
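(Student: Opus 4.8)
The plan is to reduce the estimate on the innermost layer $L_n=T_{(n)}$ — the only layer that actually touches the singular vertex $Q_i$ — to a scale-free estimate on the fixed reference region $T_{(0)}$ via the dilation $\mathbf B_n$ of Equation (\ref{eqn.map}), and then to absorb the weight via Remark \ref{dilation}. The point is that here the interpolation error cannot be controlled by an $H^{m+1}$ seminorm (the function need not lie in $H^{m+1}(L_n)$), so the argument of Lemma \ref{TNtri} is unavailable and a coarser bound is needed.

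First I would pass to the reference region. For $n$ large enough that $L_n\subset B_i$, the weight $\rho$ is comparable to the single distance $r_i$ on $L_n$ (Remark \ref{KHeq}), so $\maK_{\mathbf a+1}^{m+1}(L_n)$ may be replaced by $\maK_{a_i+1}^{m+1}(L_n)$; this is where ``$n$ sufficiently large'' is used. Writing $\hat v(\hat x,\hat y):=v(x,y)$ with $(\hat x,\hat y)=\mathbf B_n(x,y)$, two facts make the change of variables clean: the $H^1$ seminorm is invariant under the isotropic scaling $\mathbf B_n$ in two dimensions, and nodal interpolation commutes with the affine map $\mathbf B_n$ (the nodes of $T_{(n)}$ map to the nodes of $T_{(0)}$), so that the nodal interpolant of $\hat v$ is the image under $\mathbf B_n$ of $v_I$. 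Consequently $|v-v_I|_{H^1(L_n)}=|\hat v-\hat v_I|_{H^1(T_{(0)})}$, and it remains to prove the scale-free estimate $|\hat v-\hat v_I|_{H^1(T_{(0)})}\leq C\|\hat v\|_{\maK_{a_i+1}^{m+1}(T_{(0)})}$ on the fixed triangle $T_{(0)}$.

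For this scale-free estimate I would use the triangle inequality $|\hat v-\hat v_I|_{H^1(T_{(0)})}\leq |\hat v|_{H^1(T_{(0)})}+|\hat v_I|_{H^1(T_{(0)})}$ and bound the two terms separately. Since $a_i>0$ and $T_{(0)}$ is bounded, the weights appearing in the $\maK_{a_i+1}^1$ norm are bounded below, giving $\maK_{a_i+1}^{m+1}(T_{(0)})\subset \maK_{a_i+1}^1(T_{(0)})\hookrightarrow H^1(T_{(0)})$ and hence $|\hat v|_{H^1(T_{(0)})}\leq C\|\hat v\|_{\maK_{a_i+1}^{m+1}(T_{(0)})}$. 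For the interpolant, $\hat v_I$ is a fixed-degree polynomial on a fixed triangle, so $|\hat v_I|_{H^1(T_{(0)})}\leq C\max_j|\hat v(N_j)|\leq C\|\hat v\|_{L^\infty(T_{(0)})}$, where $N_j$ are the interpolation nodes of $T_{(0)}$.

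The main obstacle is making the nodal values $\hat v(N_j)$ meaningful at the singular vertex and controlling them: this requires the weighted Sobolev embedding $\maK_{a_i+1}^{m+1}(T_{(0)})\hookrightarrow C^0(\overline{T_{(0)}})$, which holds in two dimensions because $m+1\geq 2$ and $a_i+1>1$. Granting this embedding we get $\|\hat v\|_{L^\infty(T_{(0)})}\leq C\|\hat v\|_{\maK_{a_i+1}^{m+1}(T_{(0)})}$, and combining the two bounds yields the scale-free estimate. Finally I would scale back using Remark \ref{dilation} at $t=n$, $a=a_i+1$, $l=m+1$, which gives $\|\hat v\|_{\maK_{a_i+1}^{m+1}(T_{(0)})}=\kappa_{Q_i}^{n a_i}\|v\|_{\maK_{a_i+1}^{m+1}(L_n)}$; since $\kappa_{Q_i}=2^{-\theta/a_i}$ we have $\kappa_{Q_i}^{n a_i}=2^{-n\theta}=h^{\theta}$, delivering $|v-v_I|_{H^1(L_n)}\leq C h^{\theta}\|v\|_{\maK_{a_i+1}^{m+1}(L_n)}$ exactly as stated.
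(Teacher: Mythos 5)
Your proof is correct, and while it shares the paper's overall skeleton --- dilate $L_n$ onto the reference triangle $T_{(0)}$ via $\mathbf B_n$ from (\ref{eqn.map}), prove a scale-free bound there, and scale back with Remark \ref{dilation} so that $\kappa_{Q_i}^{na_i}=2^{-n\theta}=h^\theta$ supplies the entire rate --- the key step on the reference element is handled by a genuinely different mechanism. The paper introduces a cutoff $\eta$ vanishing near $Q_i$, sets $w=\eta\hat v$, uses $\hat v(Q_i)=0$ to obtain the identity $w_{\hat I}=\widehat{v_I}$, splits $\hat v-\widehat{v_I}=(\hat v-w)+(w-w_{\hat I})$, applies the standard interpolation estimate to $w$ (legitimate because $w$ vanishes near the vertex and so lies in $H^{m+1}$), and works throughout in the $\maK^1_1$ norm before converting to $\maK^{m+1}_{a_i+1}$ via Remark \ref{Lnrela}. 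You instead abandon all approximation structure in the last layer and use the crude bound $|\hat v-\hat v_I|_{H^1(T_{(0)})}\leq|\hat v|_{H^1(T_{(0)})}+|\hat v_I|_{H^1(T_{(0)})}$, controlling the interpolant by finite-dimensionality plus the embedding $\maK^{m+1}_{a_i+1}(T_{(0)})\hookrightarrow C^0(\overline{T_{(0)}})$ (valid since $m+1\geq 2$, $a_i+1>1$; this is the same fact the paper invokes to assert $\hat v(Q_i)=0$, and it also guarantees your nodal interpolant is well defined at the singular vertex, with value $0$ there). Your route is shorter and dispenses with the cutoff machinery entirely; it succeeds because only an $O(1)$ scale-free bound is needed and no cancellation between $\hat v$ and $\hat v_I$ has to be exploited --- the dilation factor alone yields $h^\theta$. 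The paper's cutoff argument is the more standard one in this literature and is the version that carries over when one needs the last-layer estimate in weighted norms or for more delicate operators, but for the statement as given both proofs are complete; just make sure to note that the constant in your finite-dimensional norm equivalence on $T_{(0)}$ is uniform because there are only finitely many initial triangles.
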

\begin{proof}: 
Recall the mapping $\mathbf B_n$ in (\ref{eqn.map}). For any point $(x, y)\in L_n$, let $(\hat x, \hat y)=\mathbf B_n(x,y)\in T_{(0)}$.

Let $\eta: T_{(0)} \rightarrow [0, 1]$ be a  smooth function that
is equal to $0$ in a neighborhood of $Q_i$, but is equal
to 1 at all the other nodal points in $\mathcal T_0$.
For a function $v(x, y)$ in $L_n$, we define $\hat v(\hat x, \hat y):=v(x, y)$ in $T_{(0)}$. We take $w=\eta \hat v$ in $T_{(0)}$. Consequently, we have for $l\geq 0$
\begin{equation}\label{eqn.aux111}
\bal
\|w\|^2_{\maK^{l}_{1}(T_{(0)})} & = & \|\eta
\hat v\|^2_{\maK^{l}_{1}(T_{(0)})} \leq C
\|\hat v\|^2_{\maK^{l}_{1}(T_{(0)})},
\eal
\end{equation}

where $C$ depends on $l$ and the smooth function $\eta$. Moreover, the condition $\hat v\in \maK_{a^i+1}^{m+1}(T_{(0)})$ with and $m\geq 2$ implies $\hat v(Q)=0$ (see, e.g., \cite[Lemma 4.7]{Hengguang}).
Let $w_{\hat I}$ be the nodal interpolation of $w$ associated with the mesh $\mathcal T_0$ on $T_{(0)}$.
Therefore, by the definition of $w$, we have
\begin{eqnarray}\label{wi}
w_{\hat I}=\hat v_{\hat I} = \widehat{v_{I}} \quad {\rm{in}}\ T_{(0)}.
\end{eqnarray}

Note that the $\maK^{l}_{1}$ norm and the $H^l$ norm are equivalent for $w$ on $T_{(0)}$, since $w=0$ in the neighborhood of the vertex $Q_i$. Let $r$ be the distance from $(x,y)$ to $Q_i$, and $\hat r$ be the distance from $(\hat x,\hat y)$ to $Q_i$. Then, by the definition of the weighted space, the scaling argument, Equations (\ref{eqn.aux111}),  (\ref{wi}),  and (\ref{eqn.dist}), we have
\begin{align*}
    |v-v_{I}|_{H^1(L_{n})}^2 &\leq C\|v-v_{I}\|_{\maK^1_{{1}}(L_{n})}^2 \\
    &\leq C\sum_{|\nu|\leq1}\|r(x,y)^{|\nu|-1}\partial^\nu (v- v_I)\|_{L^2(L_{n})}^2\\
    &\leq C\sum_{|\nu|\leq 1}\|\hat r(\hat{x},\hat{y})^{|\nu|-1}\partial^\nu (\hat v- \widehat{v_I})\|_{L^2(T_{(0)})}^2\leq C\|\hat v- w+w-\widehat{v_I}\|_{\maK^1_{1}( T_{(0)})}^2\\
    &\leq C\big( \|\hat v-w\|^2_{\maK^1_{1}(T_{(0)})} +
\|w-\widehat{v_I}\|^2_{\maK^1_{1}( T_{(0)}  )}\big) = C\big( \|\hat v-w\|^2_{\maK^1_{1}(T_{(0)})} +
\|w-w_{\hat I}\|^2_{\maK^1_{1}( T_{(0)} )}\big)\\
&\leq  C\big(   \|\hat v\|^2_{\maK^1_{1}(T_{(0)} )} +
\|w\|^2_{\maK^{m+1}_{1}( T_{(0)}  )}\big) \\
&\leq C\big(   \|\hat v\|^2_{\maK^1_{1}(T_{(0)})} +
\|\hat v\|^2_{\maK^{m+1}_{1}( T_{(0)} )}\big) =  C\big(   \|v\|^2_{\maK^1_{1}(L_n)} +
\|v\|^2_{\maK^{m+1}_{1}( L_n)}\big)\\
&\leq C
\kappa_{Q_i}^{2na_i}\|v\|_{\maK^{m+1}_{{a_i}+1}(L_{n})}^2\\
& \leq C
2^{-2n\theta}\|v\|_{\maK^{m+1}_{{a_i}+1}(L_{n})}^2\\
&\leq C
h^{2\theta}\|v\|_{\maK^{m+1}_{{a_i}+1}(L_{n})}^2,
\end{align*}

where the ninth and tenth relationships are based on Remark \ref{dilation} and Remark \ref{Lnrela}, respectively.
This completes the proof of (\ref{projgh1}).

\end{proof}
\begin{lemma}\label{gradprojerr}
\cite{PYin} Let $\mathcal T_{0}$ be an initial triangle of the triangulation $\mathcal T_n$ in Algorithm \ref{graded} with grading parameters $\kappa_{Q_i}$ in (\ref{kappainit}).
For $k\geq1, m \geq 1$,
if $v_I \in V_n^k$  be the nodal interpolation of $v\in \maK_{\mathbf a+1}^{m+1}(\Omega)$ . Then, it follows the following interpolation error
\be\label{projgherr}
\|v-v_{I}\|_{H^1(\Omega)} \leq Ch^{\theta} \|v\|_{\maK_{\mathbf a+1}^{m+1}(\Omega)}
\ee
where $h:=2^{-n}$, and $\theta$ satisfying (\ref{thetarange}).
\end{lemma}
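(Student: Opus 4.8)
The plan is to obtain the global bound by decomposing $\Omega$ into the finitely many initial triangles of $\mathcal{T}_0$ and summing the local interpolation errors already established in Lemmas \ref{r1r3}, \ref{TNtri}, and \ref{TNtri2}. Since $\mathcal{T}_0$ is fixed with a bounded number of triangles, it suffices to bound the error on each $T_{(0)}\in\mathcal T_0$ by $Ch^{\theta}$ times a local weighted norm and then add these pieces; the rate is not degraded because every local bound carries the \emph{same} power $h^{\theta}$.

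First I would treat the initial triangles $\bar T_{(0)}$ that contain no vertex $Q_i$. On these, Lemma \ref{r1r3} gives $\|v-v_I\|_{H^1(T_{(0)})}\leq Ch^{\min\{k,m\}}\|v\|_{H^{m+1}(T_{(0)})}$. Since $h=2^{-n}\leq 1$ and $\theta\leq\min\{k,m\}$ by (\ref{thetarange}), we have $h^{\min\{k,m\}}\leq h^{\theta}$; and because $\maK^{m+1}_{\mathbf a+1}$ is equivalent to $H^{m+1}$ away from the corners (Remark \ref{KHeq}), the right-hand side is bounded by $Ch^{\theta}\|v\|_{\maK^{m+1}_{\mathbf a+1}(T_{(0)})}$, as required.

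Next, for an initial triangle $T_{(0)}$ that has a singular vertex $Q_i$, I would use the disjoint layer decomposition $T_{(0)}=\bigcup_{t=0}^{n}L_t$. Applying Lemma \ref{TNtri} on $L_t$ for $0\leq t<n$ and Lemma \ref{TNtri2} on $L_n$ yields, uniformly in $t$, the bound $|v-v_I|^2_{H^1(L_t)}\leq Ch^{2\theta}\|v\|^2_{\maK^{m+1}_{a_i+1}(L_t)}$. Because the weighted-norm square is an integral that is additive over the disjoint layers, summing over $t$ collapses the sum:
\be
\sum_{t=0}^{n}|v-v_I|^2_{H^1(L_t)}\leq Ch^{2\theta}\sum_{t=0}^{n}\|v\|^2_{\maK^{m+1}_{a_i+1}(L_t)}=Ch^{2\theta}\|v\|^2_{\maK^{m+1}_{a_i+1}(T_{(0)})}.
\ee
On a triangle containing only the corner $Q_i$ and bounded away from all other vertices, the single-corner weight $r_i$ and the product weight $\rho$ are comparable, so $\|v\|_{\maK^{m+1}_{a_i+1}(T_{(0)})}\simeq\|v\|_{\maK^{m+1}_{\mathbf a+1}(T_{(0)})}$ by Remark \ref{KHeq}. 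This is exactly where the grading parameter (\ref{kappainit}) pays off: each layer contributes the same factor $h^{\theta}$, so the growing number $n+1$ of layers does not spoil the rate.

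Finally I would combine the two classes. Squaring the local $H^1$ (semi)norm errors, summing over the finitely many triangles of $\mathcal{T}_0$, and using that the global weighted norm is the sum of the local ones, I obtain $\|v-v_I\|^2_{H^1(\Omega)}\leq Ch^{2\theta}\|v\|^2_{\maK^{m+1}_{\mathbf a+1}(\Omega)}$, and (\ref{projgherr}) follows upon taking square roots. The only point requiring care — which I expect to be a nuisance rather than a genuine obstacle, since the real work is already done in the per-layer estimates — is reconciling the $H^1$ \emph{seminorm} delivered by Lemmas \ref{TNtri}--\ref{TNtri2} with the full $H^1$ \emph{norm} of the statement. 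Since in the intended application $v=u$ vanishes on $\partial\Omega$, so does its nodal interpolant, whence $v-v_I\in H^1_0(\Omega)$ and a Poincar\'e-type inequality (as used earlier for the coercivity of $a(\cdot,\cdot)$) controls the $L^2$ part of the error by the seminorm globally; alternatively, the $L^2$ interpolation error is of strictly higher order in $h$ and is absorbed directly. With this bookkeeping settled, the lemma is essentially an assembly of the estimates already in hand.
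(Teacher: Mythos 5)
Your proposal is correct and follows essentially the same route as the paper, whose proof is precisely the one-line summation of the estimates from Lemmas \ref{r1r3}, \ref{TNtri}, and \ref{TNtri2} over the initial triangles (and layers) of $\mathcal T_0$. You are in fact more careful than the paper on two points it silently elides --- the equivalence of the local single-corner weighted norm with the global one, and the reconciliation of the $H^1$ seminorm in the layer lemmas with the full $H^1$ norm in the statement.
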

\begin{proof}: 
By summing the estimates in Lemmas \ref{r1r3}, \ref{TNtri}, and \ref{TNtri2}, we have
\bes
\bal
\|v-v_{I}\|^2_{H^1(\Omega)} =& \sum_{T_{(0)} \in \mathcal T_{0}} \|v-v_{I}\|^2_{H^1(T_{(0)})} \leq Ch^{2\theta}\|v\|^2_{\maK_{\mathbf a+1}^{m+1}(\Omega)}
\eal
\ees
\end{proof}
Recall that the threshold of grading parameter $\kappa_{Q_i}$ in obtaining the optimal convergence rates, we always assume $1\leq k\leq m$ in the following discussions, otherwise we just replace $k$ by $\min\{k,m\}$.
In this section, we assume that $f\in \maK_{\mathbf{a}-1}^{m-1}(\Omega)$ with $0< \mathbf a < \bm\beta_0$,
where $\bm{\beta}_0 = (\frac{\pi}{\omega_1}, \cdots, \frac{\pi}{\omega_N})$. The regularity estimate \cite{Bacuta} for the Poisson problem (\ref{e1}) on weighted Sobolev space, follows that 
\be\label{wreggrad}
\|u\|_{\maK_{\mathbf{b}+1}^{m+1}(\Omega)} \leq C\|f\|_{\maK_{\mathbf{b}-1}^{m-1}(\Omega)},
\ee

Since the bilinear functional of the Poisson equation (\ref{e1}) is coercive and continuous on $V_n^k$, so we have by C\'ea's Theorem, 
\be\label{ceathmgrade}
\|u-u_n\|_{H^1(\Omega)} \leq C \inf_{v \in V_n^k} \|u-v\|_{H^1(\Omega)}.
\ee
Recall that $\beta_0 = \min_i\{\beta_0^i\}=\frac{\pi}{\omega}$ are the thresholds corresponding to the largest interior angle $\omega$,
then we have the following result.

\begin{theorem}\label{h1error}
\cite{dilhara} \cite{DilDiss}  Set the grading parameters $\kappa_{Q_i}=2^{-\frac{\theta}{a_i}}$ with $0<a_i<\beta_0^i$, $\theta$ being any constant satisfying $a_i\leq \theta \leq k$, and $\theta'=\min\left\{\max\{\theta, \beta_0\}, k\right\}$ satisfying $a_i \leq \theta' \leq k$.
Let $u_n\in V_{n}^{k}$ be the solution of finite element solution of Equation (\ref{al312poi}), and $u$ is the solution of the Poisson problem (\ref{e1}), then it follows
\be\label{phiH1errg3.1}
\|u-u_n\|_{H^1(\Omega)} \leq Ch^{\theta'}
\ee
where $h:=2^{-n}$.
\end{theorem}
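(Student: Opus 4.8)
The plan is to chain three facts already in hand: the quasi-optimality of the Galerkin solution, the weighted interpolation estimate of Lemma~\ref{gradprojerr}, and the weighted elliptic regularity bound~(\ref{wreggrad}). First I would apply C\'ea's inequality~(\ref{ceathmgrade}) and bound the infimum by the particular competitor $v=u_I$, the nodal interpolant of $u$ in $V_n^k$, reducing the estimate to the interpolation error:
\be
\|u-u_n\|_{H^1(\Omega)}\;\leq\; C\inf_{v\in V_n^k}\|u-v\|_{H^1(\Omega)}\;\leq\; C\|u-u_I\|_{H^1(\Omega)}.
\ee
It then suffices to prove $\|u-u_I\|_{H^1(\Omega)}\leq Ch^{\theta'}$.

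The key device is that a single graded mesh is described by many admissible parameter pairs. Since $\kappa_{Q_i}=2^{-\theta/a_i}$, for the target exponent $\theta'$ the rescaled weight $\tilde a_i:=a_i\,\theta'/\theta$ satisfies $2^{-\theta'/\tilde a_i}=2^{-\theta/a_i}=\kappa_{Q_i}$, so the pair $(\theta',\tilde{\mathbf a})$ generates exactly the same triangulation $\maT_n$. I would verify that this pair is admissible in the sense of~(\ref{thetarange}): the hypothesis $a_i\leq\theta$ gives $\tilde a_i\leq\theta'$, and $\theta'\leq k\leq m$ holds by the definition of $\theta'$. Granting for the moment the threshold bound $\tilde a_i<\beta_0^i$, the regularity estimate~(\ref{wreggrad}) applied with $\mathbf b=\tilde{\mathbf a}$ yields $u\in\maK_{\tilde{\mathbf a}+1}^{m+1}(\Omega)$ with $\|u\|_{\maK_{\tilde{\mathbf a}+1}^{m+1}(\Omega)}\leq C\|f\|_{\maK_{\tilde{\mathbf a}-1}^{m-1}(\Omega)}$, and Lemma~\ref{gradprojerr} at order $\theta'$ gives $\|u-u_I\|_{H^1(\Omega)}\leq Ch^{\theta'}\|u\|_{\maK_{\tilde{\mathbf a}+1}^{m+1}(\Omega)}$; chaining these closes the bound.

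Everything therefore rests on the threshold condition $\tilde a_i<\beta_0^i$, and I expect this to be the main obstacle. When $\theta\geq\beta_0$ we have $\theta'=\theta$, so $\tilde a_i=a_i<\beta_0^i$ holds automatically and the argument degenerates to a direct use of Lemma~\ref{gradprojerr}: the grading is already strong enough to realize the full interpolation order $\theta$. The delicate regime is $\theta<\beta_0$, where (using $\beta_0<1\leq k$) one has $\theta'=\beta_0$ and $\tilde a_i=a_i\beta_0/\theta>a_i$. At a corner with $\beta_0^i>\beta_0$ the chain $\tilde a_i\leq\beta_0<\beta_0^i$ follows from $a_i\leq\theta$; the genuinely borderline case is a corner attaining the minimal index $\beta_0^i=\beta_0$, where $\tilde a_i<\beta_0^i$ forces the strict grading $a_i<\theta$ and, with it, the higher-weight data regularity $f\in\maK_{\tilde{\mathbf a}-1}^{m-1}(\Omega)$ in place of the bare hypothesis $f\in\maK_{\mathbf a-1}^{m-1}(\Omega)$. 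I would dispatch this either by assuming a genuine (non-midpoint) refinement $a_i<\theta$ at the critical corners, or by letting $\tilde a_i\uparrow\beta_0^i$ and absorbing the resulting loss into the generic constant, exploiting that~(\ref{wreggrad}) is valid for every weight strictly below $\bm\beta_0$. Once $\tilde a_i<\beta_0^i$ is secured, Lemma~\ref{gradprojerr}—obtained by summing the layerwise estimates of Lemmas~\ref{r1r3}, \ref{TNtri}, and~\ref{TNtri2}—applies verbatim to the pair $(\theta',\tilde{\mathbf a})$ and completes the proof.
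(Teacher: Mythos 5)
Your proposal follows exactly the paper's route: C\'ea's inequality (\ref{ceathmgrade}), the interpolation estimate of Lemma \ref{gradprojerr} applied to the competitor $u_I$, and the weighted regularity bound (\ref{wreggrad}); the paper's own proof is precisely this one-line chain. The reparametrization $(\theta,\mathbf a)\mapsto(\theta',\tilde{\mathbf a})$ by which you upgrade the exponent from $\theta$ to $\theta'$ --- together with the borderline issue at a corner attaining $\beta_0^i=\beta_0$ with $a_i=\theta$, and the attendant strengthening of the data hypothesis to $f\in\maK_{\tilde{\mathbf a}-1}^{m-1}(\Omega)$ --- is left entirely implicit in the paper, so your write-up supplies the detail the printed proof omits rather than deviating from it.
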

\begin{proof}: 
By Equation (\ref{ceathmgrade}) and the interpolation error estimates in Lemma \ref{gradprojerr} under the regularity result in Equation  (\ref{wreggrad}) and $\kappa_{Q_i}=2^{-\frac{\theta}{a_i}}$,
we have the estimate  
$$
\|u-u_n\|_{H^1(\Omega)} \leq C \|u-u_I\|_{H^1(\Omega)} \leq Ch^{\theta'}.
$$

\end{proof}

\begin{theorem}\label{l2erro}
\cite{dilhara} Set the grading parameters $\kappa_{Q_i}=2^{-\frac{\theta}{a_i}}$ with $0<a_i<\beta_0^i$, $\theta$ being any constant satisfying $a_i\leq \theta \leq k$, and $\theta'=\min\left\{\max\{\theta, \beta_0\}, k\right\}$ satisfying $a_i \leq \theta' \leq k$.
Let $u_n\in V_{n}^{k}$ be the solution of finite element solution of Equation (\ref{al312poi}), and $u$ is the solution of the Poisson problem (\ref{e1}), then it follows
\be\label{phiH1errg3.1.1}
 \|u-u_n\|\leq Ch^{\min\left\{2\theta', \theta'+1\right\}},
\ee
where $h:=2^{-n}$.
\end{theorem}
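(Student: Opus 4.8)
The plan is to run an Aubin--Nitsche duality argument (Nitsche's trick), exactly as in the proof of Lemma~\ref{lem23}, but tracking the weighted Sobolev regularity forced by the re-entrant corners. The one genuinely new feature is that the dual solution inherits only $\maK^2$-type regularity (its right-hand side lies merely in $L^2$), and this reduced order is precisely what caps the dual interpolation rate and produces the inner exponent $\min\{\theta',1\}$ in the stated bound.

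First I would introduce the dual (adjoint) problem: let $w\in H_0^1(\Omega)$ solve
\begin{equation*}
a(w,v)=(u-u_n,v)\qquad\forall\,v\in H_0^1(\Omega),
\end{equation*}
so that $-\Delta w=u-u_n$ with homogeneous Dirichlet data. Because $\Omega$ has a re-entrant corner we have $\beta_0=\pi/\omega<1$, and every $\mathbf b$ with $0<\mathbf b<\bm{\beta}_0$ satisfies $\mathbf b-1<0$; hence the weight $\rho^{-(\mathbf b-1)}$ is bounded near the corners and $u-u_n\in L^2(\Omega)\subset\maK_{\mathbf b-1}^{0}(\Omega)$. Applying the weighted regularity estimate (\ref{wreggrad}) with $m=1$ then gives $w\in\maK_{\mathbf b+1}^{2}(\Omega)$ together with $\|w\|_{\maK_{\mathbf b+1}^{2}(\Omega)}\leq C\|u-u_n\|$.

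Next I would invoke Galerkin orthogonality. Testing the dual problem with $v=u-u_n$ and subtracting the nodal interpolant $w_I\in V_n^k$ (legitimate since $a(u-u_n,w_I)=0$) yields
\begin{equation*}
\|u-u_n\|^2=a(w,u-u_n)=a(w-w_I,u-u_n)\leq\|\nabla(w-w_I)\|\,\|\nabla(u-u_n)\|.
\end{equation*}
For the primal factor I would quote Theorem~\ref{h1error} verbatim, giving $\|\nabla(u-u_n)\|\leq Ch^{\theta'}$. For the dual factor I would apply the graded interpolation estimate of Lemma~\ref{gradprojerr} to $w$; since $w$ lives in $\maK_{\mathbf b+1}^{m+1}$ with $m=1$, the admissible range (\ref{thetarange}) caps the interpolation exponent at $\min\{k,m\}=1$, and re-running the layer-by-layer analysis of Lemmas~\ref{TNtri}--\ref{TNtri2} with $\mu=1$ yields the rate $\min\{\theta',1\}$, namely $\|\nabla(w-w_I)\|\leq Ch^{\min\{\theta',1\}}\|w\|_{\maK_{\mathbf b+1}^{2}(\Omega)}\leq Ch^{\min\{\theta',1\}}\|u-u_n\|$. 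Substituting both bounds and cancelling one factor of $\|u-u_n\|$ gives $\|u-u_n\|\leq Ch^{\theta'+\min\{\theta',1\}}=Ch^{\min\{2\theta',\,\theta'+1\}}$, where the final identity is the elementary case split $\theta'\le1$ versus $\theta'\ge1$.

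I expect the dual factor to be the main obstacle. One must confirm that the \emph{same} graded mesh, whose grading $\kappa_{Q_i}=2^{-\theta/a_i}$ was tuned for the primal solution, still delivers the rate $\min\{\theta',1\}$ for the dual. This amounts to two pieces of weighted-space bookkeeping: verifying that $u-u_n$ sits in $\maK_{\mathbf b-1}^{0}$ with a weight index compatible with the regularity shift in (\ref{wreggrad}), and verifying that the reduced order $m=1$ of the dual is the binding constraint, so that the interpolation exponent is capped at $\min\{k,1\}=1$ and the per-layer telescoping (as in Lemma~\ref{TNtri}, now with $\mu=1$) still sums to a global $h^{\min\{\theta',1\}}$ even when the grading parameter $\theta$ exceeds $1$. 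Once this cap is justified, the remainder is the standard duality cancellation and the routine case split.
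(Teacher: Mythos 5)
Your proposal is correct and follows essentially the same route as the paper: an Aubin--Nitsche duality argument in which the dual solution is placed in a weighted space $\maK^2_{\mathbf a'+1}(\Omega)$ via the regularity estimate (\ref{wreggrad}), Galerkin orthogonality reduces the error to the product of the primal $H^1$ rate $h^{\theta'}$ from Theorem \ref{h1error} and the dual interpolation rate $h^{\min\{\theta',1\}}$ from Lemma \ref{gradprojerr}, and the exponents combine to $\min\{2\theta',\theta'+1\}$. You actually spell out more carefully than the paper why $u-u_n\in\maK^{0}_{\mathbf a'-1}(\Omega)$ and why the dual order $m=1$ caps the interpolation exponent, but these are elaborations of the same argument, not a different one.
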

\begin{proof}:  Consider the Poisson problem 
\be\label{dualVL2w}
-\Delta v = u - u_n \text{ in } \Omega, \quad v =0 \text{ on } \partial \Omega.
\ee
Then we have
\be\label{dualL2w}
\|u - u_n\|^2=(\nabla (u - u_n), \nabla v).
\ee
Subtract Equation (\ref{al312poi}) from weak formulation of Equation (\ref{e1}), we have the Galerkin orthogonality,
\be\label{poigo}
(\nabla (u - u_n), \nabla \phi) = 0, \quad \forall \phi \in V_n^k.
\ee
Setting $\phi=v_I\in V_{n}^{k}$ the nodal interpolation of $v$ and subtract Equation  (\ref{poigo}) from  Equation (\ref{dualL2w}), we have
\be\label{dualL2w2}
\bal
\|u - u_n\|^2=(\nabla (u - u_n), \nabla (v-v_I)) \leq \|u-u_n\|_{H^1(\Omega)} \|v-v_I\|_{H^1(\Omega)}.
\eal
\ee
Similarly, the solution
$v\in \mathcal K^{2}_{\mathbf b'+1}(\Omega)$ satisfies the regularity estimate
\be\label{poiregw1}
\|v\|_{K^{2}_{\mathbf a'+1}(\Omega)} \leq C\|u-u_n\|_{K^{0}_{\mathbf a'-1}(\Omega)} \leq C\|u-u_n\|,
\ee
where the $i$th entry of $\mathbf{a}'$ satisfying $a'_i=\min\left\{a_i,1\right\}$.
By Lemma \ref{gradprojerr} with grading parameter $\kappa_{Q_i} =2^{-\frac{\theta'}{a_i}}$ again,
we have the interpolation error
\be\label{dualintererr3.1}
\| v - v_I \|_{H^1(\Omega)} \leq Ch^{\min\{\theta', 1\}}\|v\|_{K^{2}_{\mathbf b'+1}(\Omega)}.
\ee
The $L^2$ error estimate in Equation  (\ref{phiH1errg3.1}) can be obtained by combining Equations (\ref{dualL2w2}), (\ref{poiregw1}), and (\ref{dualintererr3.1}).
\end{proof}

\section{Numerical Results}\label{sec3}
In this section, we present numerical tests to validate our theoretical predictions for the proposed finite element algorithm solving the Poisson problem under uniform and graded meshes. If an exact solution (or vector) $v$ is unknown, we use the following numerical convergence rate

\begin{eqnarray}\label{rate}
{\mathcal R}=\log_2\frac{|v_j-v_{j-1}|_{[H^l(\Omega)]}}{|v_{j+1}-v_j|_{[H^l(\Omega)]}},
\end{eqnarray}

$l=0,1$ as an indicator of the actual convergence rate.
Here $v_j$ denotes the finite element solution on the mesh $\mathcal T_j$ obtained after $j$  refinements of the initial triangulation $\mathcal T_0$.
All the numerical examples are tested on MATLAB R2022a in MacBook Air (M1, 2020) with 8 GB memory by adapting iFEM MATLAB package \cite{Chen}.

\begin{example}
In this example, we solve the Poisson equation (\ref{e1}) using linear finite elements. We consider a convex polygonal domain as illustrated in figure (\ref{conve}a) and apply a Dirichlet boundary condition $u = 0$ on $\partial \Omega$, with $f=2$. As we increase the number of uniform mesh refinements, both the $H^1$ and $L^2$ errors gradually decrease. We have numerically obtained the $H^1$ convergent rate ${\mathcal R}=0.9941$ and the $L^2$ convergent rate ${\mathcal R}=1.9919$, which are very close to the theoretical convergent rates ${\mathcal R}=1$ and ${\mathcal R}=2$, respectively, as expected based on Lemma \ref{lem22} and Lemma \ref{lem23}. Figures (\ref{conve}b) and (\ref{conve}c) depict two consecutive uniform mesh refinements starting from the initial mesh shown in figure (\ref{conve}a). Finally, figure (\ref{sol7}) displays the numerical solution after seven mesh refinements, observed from two different view angles.

\begin{figure}[ht]
\centering
\subfigure[]{\includegraphics[width=0.24\textwidth]{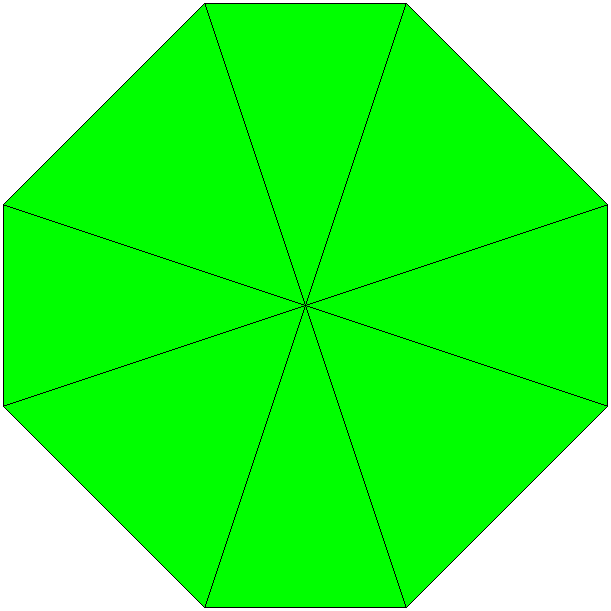}} \hspace{0.4cm}
\subfigure[]{\includegraphics[width=0.24\textwidth]{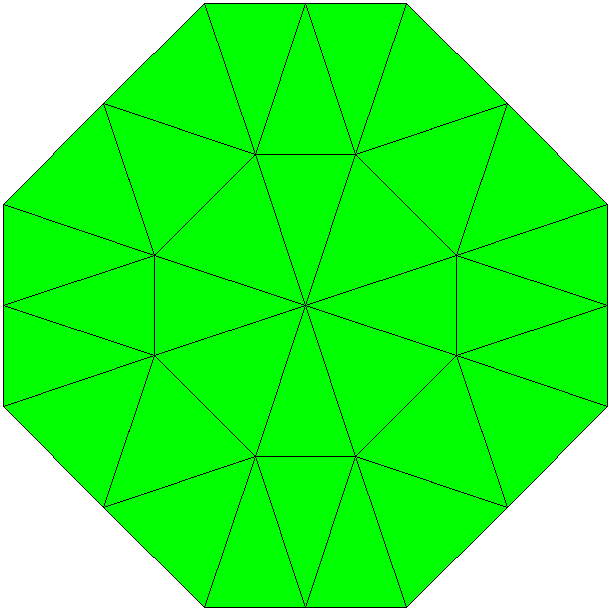}} \hspace{0.4cm}
\subfigure[]{\includegraphics[width=0.24\textwidth]{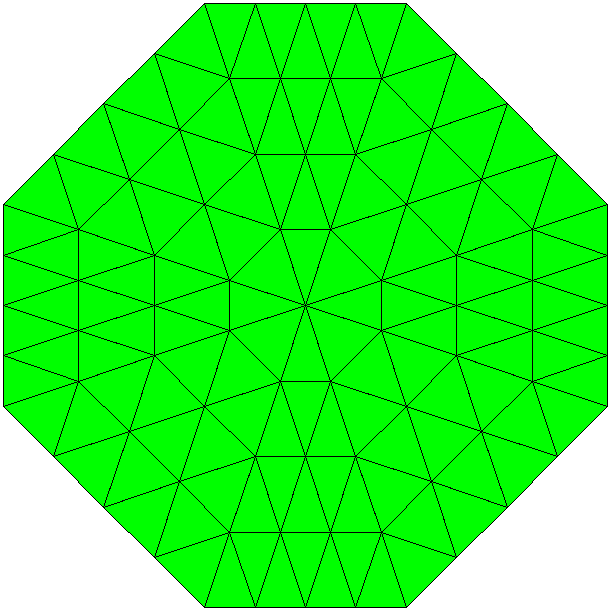}}
\caption{(a) Initial mesh; (b) First mesh refinement; (c) Second mesh refinement.}\label{conve}
\end{figure}

\begin{table}[!t] 
\centering
\def\arraystretch{1.20}
\caption{Errors and convergent rates under octagon domain on quasi-uniform meshes.}
\tabcolsep=22pt
\vspace{-0.5\baselineskip}
\begin{tabular}[c]{|c|c|c|c|c|}
\hline
$j$& \multicolumn{1}{|c |}{$H^{1}$ error}& \multicolumn{1}{|c |}{$H^{1}$ rate} & \multicolumn{1}{| c|}{$L^{2}$ error} & \multicolumn{1}{| c|}{$L^{2}$ rate}\\

\hline
2  & 2.9515 & - & 1.4257 & - \\
\cline{1-5}
\hline
3  & 2.4415 & 0.2737 &  0.7706 & 0.8876  \\
\cline{1-5}
\hline
4  & 1.4404 & 0.7613 & 0.2491 & 1.6291   \\
\cline{1-5}
\hline
5  &   0.7853 & 0.8751 & 0.0714 & 1.8027  \\
\cline{1-5}
\hline
6  & 0.4123 &  0.9294 & 0.0192& 1.8917 \\
\cline{1-5}
\hline
7  & 0.2122 & 0.9585 & 0.0050 & 1.9381 \\
\cline{1-5}
\hline
8  & 0.1080 &0.9749 &  0.0013  & 1.9636\\
\cline{1-5}
\hline
9  & 0.0546 & 0.9846 & 3.2681e-04 & 1.9782 \\
\cline{1-5}
\hline
10 & 0.0275 & 0.9905 & 8.2455e-05 & 1.9868  \\
\cline{1-5}
\hline
11 & 0.0138 & 0.9941 & 2.0729e-05 & 1.9919 \\
\cline{1-5}
\hline

\end{tabular}\label{rate2}
\end{table}
  
\end{example}

\begin{figure}[H]
\centering
\subfigure[]{\includegraphics[width=0.43\textwidth]{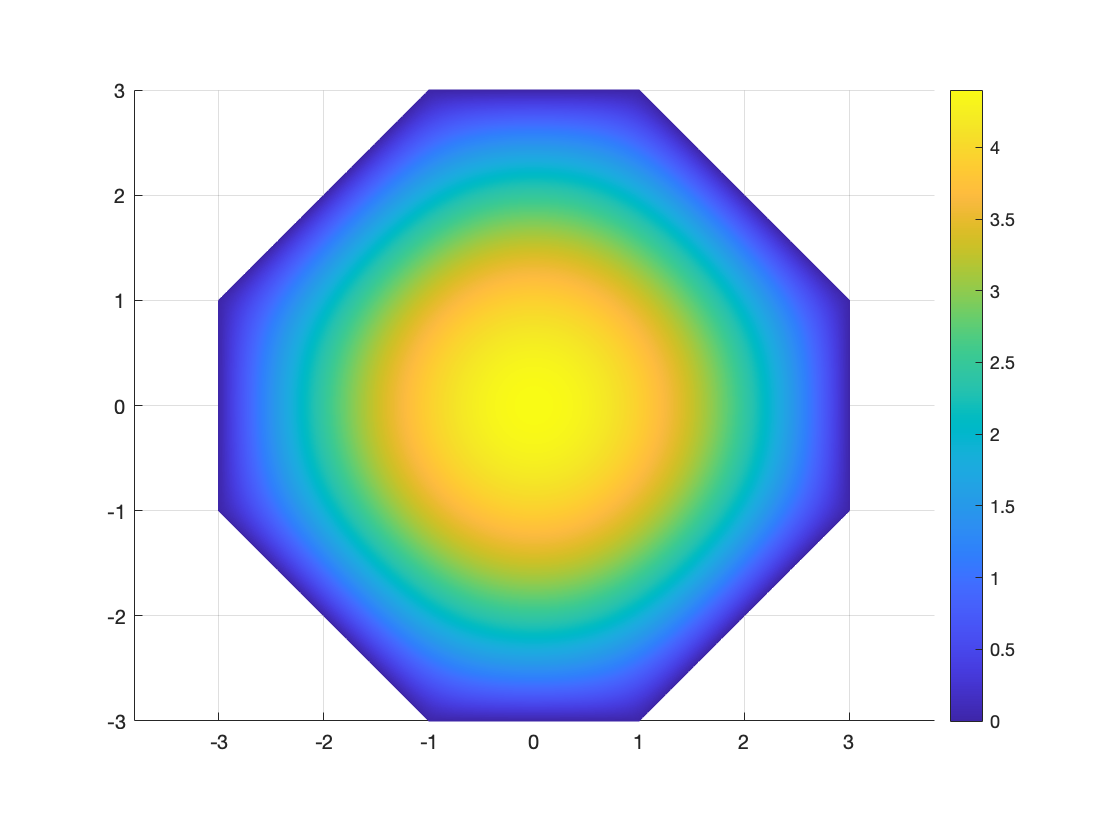}}
\subfigure[]{\includegraphics[width=0.43\textwidth]{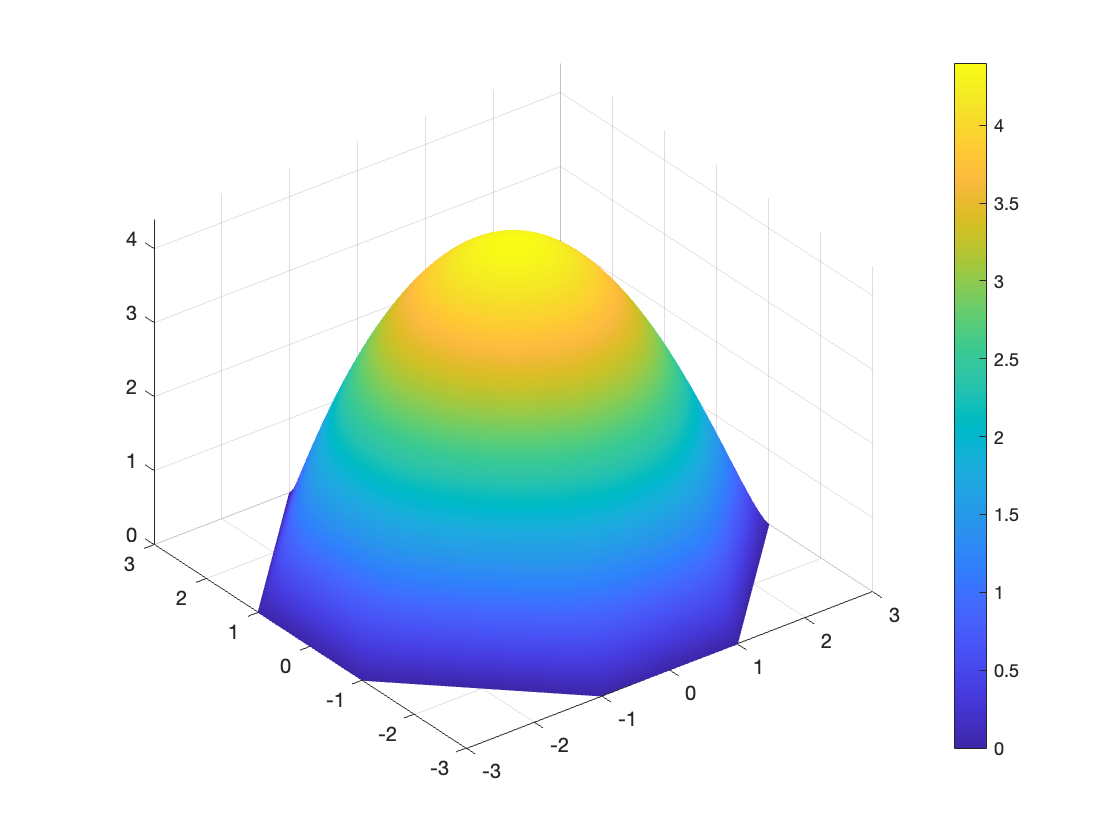}}

\caption{Numerical solution after 7 uniform mesh refinements.}\label{sol7}
\end{figure}

\begin{example}
In this example, we solve the Poisson equation on a non-convex domain (see figure \ref{plane}a)  with seven re-entrant corners. with $f=\frac{1}{2}$\; for a sequence of grading parameters $\kappa = 0.1, 0.2, 0.3, 0.4, 0.5 $ where $\kappa =0.5 $ is the uniform mesh refinements. In the presence of re-entrant corners uniform mesh refinements (i.e. $\kappa =0.5 $ ) won't be able to capture the singular behavior of the solution. Thus as you can see from Tables 2 and 3, after 10 mesh refinements $L^2$ convergent rate is $1.2989$ and the $H^1$ convergent rate is $0.6842$ which is not the optimal convergent rate. However, with the graded mesh refinements we were able to obtain the optimal convergent rate as you can see from tables 3 and 4. For examples, in table 2, numerical $L^2$ convergent rate ${\mathcal R}= 1.9868$ for $\kappa=0.1$ after 10 mesh refinements. This is in strong agreement with the Theorem \ref{h1error} where the theoretical $L^2$  convergent rate is ${\mathcal R}= 2$ under  $L^2$ norm. Moreover,  in table 4, numerical $H^{1}$  convergent rate is ${\mathcal R}= 0.9943$ for $\kappa=0.1$ after 10 mesh refinements. This is also in strong agreement with the Theorem \ref{l2erro} where the theoretical $H^{1}$ convergent rate is ${\mathcal R}= 1$. Finally, figure (\ref{plan3d}) displays the numerical solution after seven mesh refinements, observed from two different view angles.

\end{example}
\begin{figure}[ht]
\centering
\subfigure[]{\includegraphics[width=0.30\textwidth]{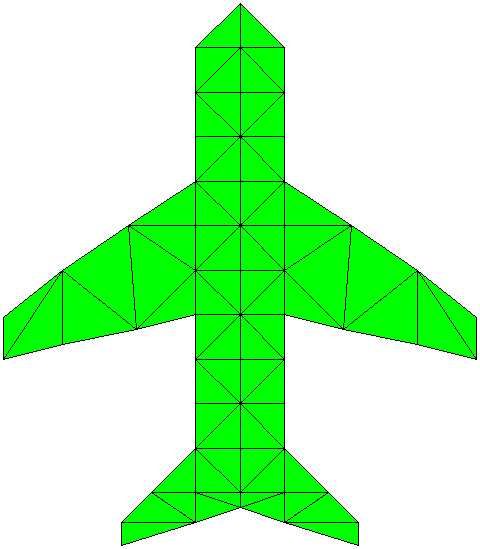}} \hspace{0.5cm}
\subfigure[]{\includegraphics[width=0.30\textwidth]{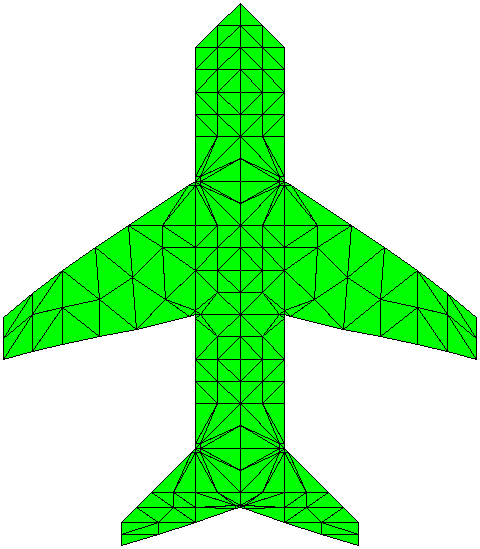}}  \hspace{0.5cm}
\subfigure[]{\includegraphics[width=0.30\textwidth]{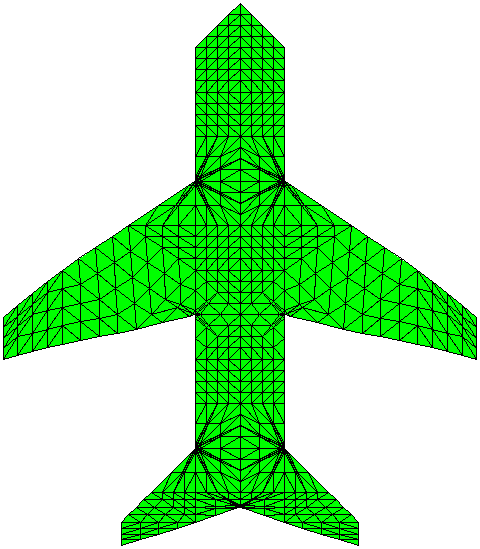}}
\caption{Initial mesh (a) with two consecutive graded mesh refinements (b) and (c) for $\kappa=0.1$.}\label{plane}
\end{figure}

\begin{table}[ht] 
\centering
\def\arraystretch{1.20}
\caption{$L^2$ convergent rates for different gradient parameters $\kappa$ for consecutive mesh levels $j$.}
\tabcolsep=18pt
\vspace{-0.5\baselineskip}
\begin{tabular}[c]{|c|c|c|c|c|c|}
\hline

$j $& \multicolumn{1}{|c |}{$\kappa=0.1$}& \multicolumn{1}{|c |}{$\kappa=0.2$} & \multicolumn{1}{| c|}{$\kappa=0.3$} & \multicolumn{1}{| c|}{$\kappa=0.4$} & \multicolumn{1}{| c|}{$\kappa=0.5$}\\
\cline{1-6}
\hline
3  & 1.1870& 1.4085 &  1.5756 & 1.6989 & 1.7470  \\
\cline{1-6}
\hline
4  & 1.7042 & 1.7933 & 1.8129 & 1.7709 & 1.7035 \\
\cline{1-6}
\hline
5  &  1.8423 & 1.8951 & 1.8884 & 1.7925 &  1.6070 \\
\cline{1-6}
\hline
6  & 1.9167 &  1.9433 & 1.9199 & 1.7856 & 1.4985 \\
\cline{1-6}
\hline
7  & 1.9587 & 1.9677 & 1.9371 &  1.7695 & 1.4104 \\
\cline{1-6}
\hline
8  & 1.9793 & 1.9790 &  1.9474  &  1.7514 & 1.3523 \\
\cline{1-6}
\hline
9  &  1.9865 & 1.9827 & 1.9537 & 1.7346 &  1.3182\\
\cline{1-6}
\hline
10 & 1.9868 & 1.9818 & 1.9568 &  1.7204  &  1.2989 \\
\cline{1-6}
\hline

\end{tabular}\label{rate4}
\end{table}

\vspace{10cm}
\begin{table}[ht] 
\centering
\def\arraystretch{1.20}
\caption{$H^1$ convergent rates for different gradient parameters $\kappa$ for consecutive mesh levels $j$.}
\tabcolsep=18pt
\vspace{-0.5\baselineskip}
\begin{tabular}[c]{|c|c|c|c|c|c|}
\hline
$j  $& \multicolumn{1}{|c |}{$\kappa=0.1$}& \multicolumn{1}{|c |}{$\kappa=0.2$} & \multicolumn{1}{| c|}{$\kappa=0.3$} & \multicolumn{1}{| c|}{$\kappa=0.4$} & \multicolumn{1}{| c|}{$\kappa=0.5$}\\

\cline{1-6}
\hline
3  & 0.7343 & 0.5992 & 0.7016 &  0.8333 & 0.8501 \\
\cline{1-6}
\hline
4  & 0.9190 &  0.9052 & 0.8728 & 0.8742 & 0.8533 \\
\cline{1-6}
\hline
5  &   0.9362 & 0.9577 & 0.9375 & 0.9031 &  0.8319  \\
\cline{1-6}
\hline
6  & 0.9628 &  0.9759 & 0.9594 & 0.9124 &  0.7986 \\
\cline{1-6}
\hline
7  & 0.9811 &  0.9857 & 0.9693 &  0.9123 & 0.7629\\
\cline{1-6}
\hline
8  & 0.9904 & 0.9906 &  0.9747  &  0.9083 &   0.7305\\
\cline{1-6}
\hline
9  & 0.9938 & 0.9924 & 0.9779 & 0.9027 & 0.7041 \\
\cline{1-6}
\hline
10 & 0.9943 & 0.9924 & 0.9797 &  0.8964  & 0.6842\\
\cline{1-6}
\hline

\end{tabular}\label{rate3}
\end{table}

\begin{figure}[H]
\centering
\subfigure[]{\includegraphics[width=0.49\textwidth]{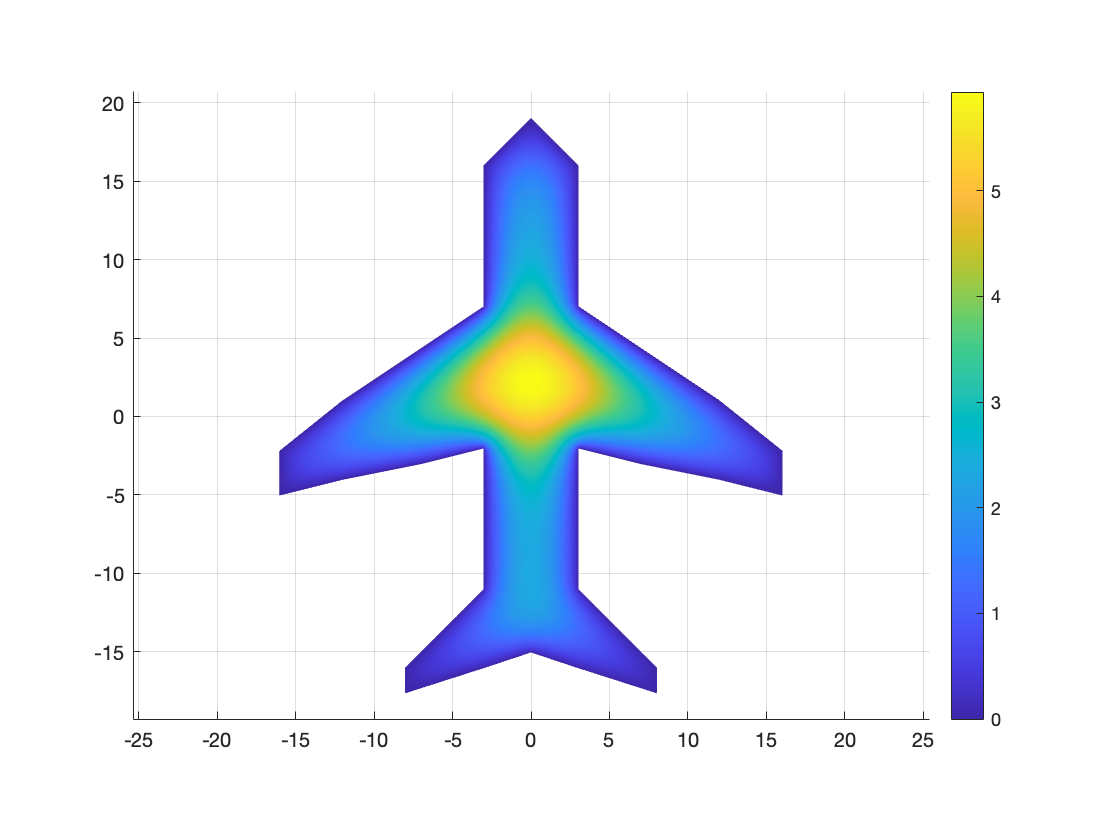}}
\subfigure[]{\includegraphics[width=0.49\textwidth]{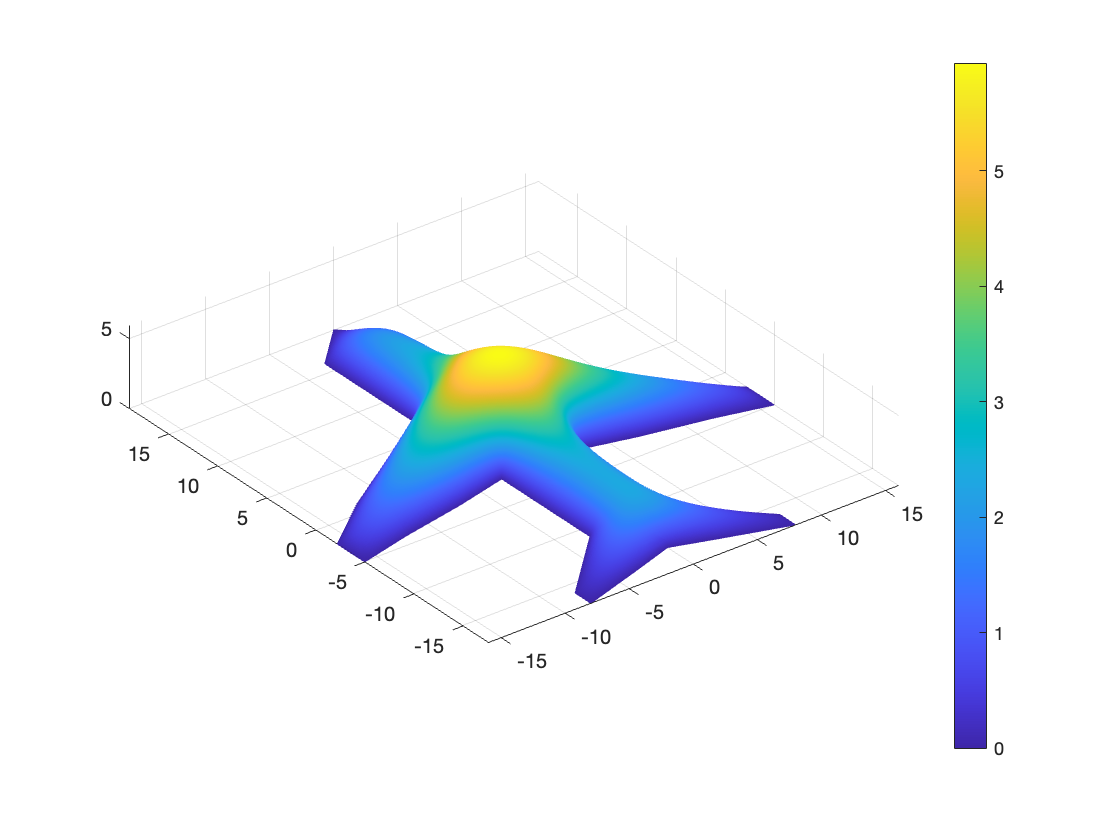}}

\caption{Numerical solution after 8 graded mesh refinements with $\kappa=0.1$.}\label{comp3}
\end{figure}\label{plan3d}

\section{Conclusion}

This work lays the groundwork for future research in solving more complex partial differential equations. It can also be used as a standard for evaluating the effectiveness of other numerical methods. We anticipate that it may be feasible to expand this method to solve the 3D Poisson equation, especially when dealing with singular solutions. This is currently the focus of our ongoing research. In summary, the proposed method offers a promising approach for efficiently and accurately solving elliptic partial differential equations, even when corner singularities are present.

\section*{Acknowledgments}
This research was financially supported by the Wayne State University.

\section*{Authors' Contributions}

All authors have read and approved the final version of the manuscript. The authors contributed equally to this work.


\section*{Conflicts of Interest}
The authors declare that there are no conflicts of interest regarding the publication of this paper.

\end{document}